\numberwithin{equation}{section}
\let\OLDthebibliography\thebibliography
\renewcommand\thebibliography[1]{
  \OLDthebibliography{#1}
  \setlength{\parskip}{1.5pt}
  \setlength{\itemsep}{1.5pt plus 0.3ex}
}
\renewcommand{\phi}{\varphi}
\newcommand{\K}{\mathcal K}
\renewcommand{\L}{\mathcal L}
\newcommand{\NN}{\mathbb{N}}
\newcommand{\ZZ}{\mathbb{Z}}
\newcommand{\e}{\varepsilon}
\renewcommand{\ll }{\left\langle\hspace{-.7mm}\left\langle }
\newcommand{\la}{\langle}
\newcommand{\ra}{\rangle}
\newcommand{\rr }{\right\rangle\hspace{-.7mm}\right\rangle }
\newcommand{\Hl}{\{ H_\lambda\}_{\lambda\in \Lambda}}
\newtheorem{thm}{Theorem}[section]
\newtheorem*{thm*}{Theorem}
\newtheorem{cor}[thm]{Corollary}
\newtheorem{lem}[thm]{Lemma}
\newtheorem{prop}[thm]{Proposition}
\theoremstyle{definition}
\newtheorem{defn}[thm]{Definition}
\theoremstyle{remark}
\newtheorem{ex}[thm]{Example}
\let\le\leqslant
\let\texthat\^%
\def\^#1{\ifmmode\widehat{#1}\else\texthat#1\fi}
\def\gp#1{\left\langle#1\right\rangle}
\let\iso\simeq
\let\zvezda*
\newbox\tmpbox
\newdimen\tmpdim
\def\narrow[#1]#2\par %Делает #2 ширины не больше #1, помещает в \tmpbox
\tmpbox\hbox{#2}%
\tmpbox\vbox{\hsize=#1 #2}%
\tmpbox\vbox{\hsize=\wd\tmpbox \advance\hsize by -1pt
#2}%
\tmpbox\vbox{\hsize=\wd\tmpbox \advance\hsize by 1pt #2}%
\def\dispno#1(#2){    % \disp с номером
                $$
                \setbox\tmpbox\vbox{\narrow[\hsize]\noindent#1\par}
                \vcenter{\box\tmpbox}
                                \eqno{(#2)}
                $$
          }
\begin{document}

\title{Equationally separable classes of groups}

\bigskip

\author{%
Alexander A. Buturlakin%
\thanks{The first author was supported by the Program of Fundamental
Research of the Russian Academy of Sciences, project~FWNF-2022-0002.},\;
Anton A. Klyachko%
\thanks{The second author was supported by the Russian Science Foundation,
project 22-11-00075.
%AK
%and the Theoretical Physics and Mathematics Advancement Foundation BASIS.
%%
},
\;
Denis V. Osin%
\thanks{The third author was supported by the NSF grant DMS-2405032 and
the Simons Fellowship in Mathematics MP-SFM-00005907.}}

\date{}

\maketitle

\begin{abstract}
Over each nontrivial finite group $G$, there exists a
%AK
finite
system of equations
having no solutions in larger finite groups but having a solution in a
periodic group containing $G$.  We prove several similar facts about
amenable, orderable, locally indicable, solvable, nilpotent, and other
classes of groups. As a byproduct, we also show that any amalgam of two
countable periodic groups with finite intersection embeds into a periodic
group, thereby answering a 1960 question of B.~Neumann in the countable
case.
\end{abstract}

\section{Introduction}

Let $F_n$ denote the free group of rank $n$ with a basis 
$\{x_1, \ldots,x_n\}$. 
Recall that a \emph{system of equations} with coefficients in a
group $G$ and unknowns $x_1, \ldots, x_n$ is a collection of equalities of
the form
\begin{equation}\label{Eq:sys}
w_i=1, \;\;\; i\in I,
\end{equation}
where each $w_i$ is an element of $G\ast F_n$. If $n=1$, we call the
system (\ref{Eq:sys}) \emph{univariate}.

A tuple $(g_1, \ldots, g_n)\in G^n$ is a \emph{solution} to the system
(\ref{Eq:sys}) if all $w_i$ are contained in the kernel of the
homomorphism $G\ast F_n\to G$ that is identical on $G$ and sends $x_j$ to
$g_j$ for all $j=1, \ldots , n$. Further, the system (\ref{Eq:sys}) is
\emph{solvable over} $G$ if it is solvable in 
an overgroup
$\widetilde G \supseteq G$; if such a group $\widetilde G$ can be chosen
in a certain class of groups $\K$, we say that the system~(\ref{Eq:sys})
is \emph{solvable in~$\K$}.

Research on the solvability of equations over groups can be said to have
begun with Magnus's Freiheitssatz \cite{Mag30} and has remained an active
area of study ever since. The central problem in this context is
determining whether a given equation or system of equations with
coefficients in a certain group $G$ admits a solution over $G$.  Although
this question remains largely open in general, the answer was shown to be
affirmative in many particular cases. Moreover, if $G$ belongs to a
suitably ``well-behaved" class $\K$ (e.g., the class of finite groups or
locally indicable groups) and the system is ``non-degenerate" in a certain
precise sense, the solution can often be found in an overgroup of $G$ that
also belongs to $\K$.

Classical results in this direction include the theorem of Gerstenhaber
and Rothaus \cite{GR62} on solvability of systems over finite groups and
the Brodskii--Howie--Short theorem on equations over locally indicable
groups \cite{B84,How81,Sh81}. For more recent developments, we refer to
the papers \cite{ABA21,EH21,NT22,KM23,Ch23,KMR24,Mi24,Mi24a} and
references therein; see also the survey \cite{Ro12}.

In this paper, we propose a general method for constructing systems of
equations that separate natural -- and often closely related -- classes of
groups in the following sense.

\begin{defn}
A subclass $\K$ of a class of groups $\L$ is said to be \emph{equationally
separable} in $\L$ if, for every non-trivial group $G\in \K$, there exists
a finite system of equations with coefficients in $G$, called a
\emph{separating system}, that is solvable in $\L$ but not in $\K$.
\end{defn}

We mention three sample results here and refer the reader to
Theorem~\ref{Thm:main} for more examples.

\begin{thm}\label{Thm:Ex}
\begin{enumerate}
\item[(a)]
The class of all finite groups is equationally separable
in the class of all periodic groups.
\item[(b)]
The class of all groups with decidable conjugacy problem is
equationally separable in the class of
all groups with decidable word problem.
\item[(c)]
The class of all locally indicable groups is equationally separable
in the class of all
left-orderable groups.
\end{enumerate}
\noindent
Moreover, in cases (a) and (b), the separating systems can
%AK
%always
%%
be made univariate.
\end{thm}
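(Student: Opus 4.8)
The plan is to prove the three parts by a single mechanism. For a non-trivial $G$ in the subclass $\K$ and a finite system $R$ over $G$ in unknowns $x_1,\dots ,x_n$, let $\Gamma_R=\langle G,x_1,\dots ,x_n\mid R\rangle$ be the associated equation group. A tuple in an overgroup $\widetilde G\supseteq G$ solving $R$ is exactly a homomorphism $\Gamma_R\to\widetilde G$ that is injective on $G$, so $R$ is solvable over $G$ inside a class $\mathcal C$ if and only if some quotient of $\Gamma_R$ injective on $G$ lies in $\mathcal C$ (when $\mathcal C$ is closed under subgroups, as the periodic, left-orderable and locally indicable classes are) or, in general, embeds into a member of $\mathcal C$. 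Hence it suffices, for each $G\in\K$, to construct a finitely presented group $\Gamma$ with a marked copy of $G$ such that: (i) $\Gamma$ admits a homomorphism to a group of $\L$ that is injective on $G$; and (ii) no quotient of $\Gamma$ injective on $G$ belongs to (resp.\ embeds into a member of) $\K$. The relators of $\Gamma$ relative to $G$ then form a separating system, (i) giving solvability in $\L$. Everything comes down to forcing (ii).

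For part (a) I would use the dichotomy: a finite system over a finite group $G$ is unsolvable over $G$ in the class of finite groups exactly when $G$ meets the finite residual of $\Gamma_R$ non-trivially -- for otherwise, since $G$ is finite, one intersects finitely many finite-index normal subgroups avoiding the non-trivial elements of $G$ and gets a finite quotient into which $G$ embeds. So fix $g_0\in G$ of prime order $p$ and suppose we have a finitely presented group $\Gamma_0$ that contains a copy of $\mathbb{Z}/p$, has no non-trivial finite quotient, and has an infinite periodic quotient $D_0$ in which this $\mathbb{Z}/p$ survives. Put $\Gamma:=G\ast_{\mathbb{Z}/p}\Gamma_0$, identifying $\langle g_0\rangle$ with $\mathbb{Z}/p\le\Gamma_0$. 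Then $\Gamma$ is finitely presented over $G$ and contains $G$; any finite quotient of $\Gamma$ trivialises $\Gamma_0$, hence kills $g_0$, so (ii) holds for finite groups. For (i), $G\ast_{\mathbb{Z}/p}D_0$ is an amalgam of two countable periodic groups over a finite subgroup, so by the theorem that such amalgams embed into periodic groups it sits inside a periodic $W$, and the composite $\Gamma\twoheadrightarrow G\ast_{\mathbb{Z}/p}D_0\hookrightarrow W$ is injective on $G$. Thus (a) reduces to producing $\Gamma_0$.

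The construction of $\Gamma_0$ is where I expect the real difficulty to lie, because its requirements pull in opposite directions: having no non-trivial finite quotient pushes toward rigidity -- the known finitely presented groups with torsion in their finite residual (Deligne, Abels) have no infinite periodic quotient (e.g.\ by Margulis' normal subgroup theorem) -- while having an infinite periodic quotient forbids such rigidity. I would reconcile the two using Ol'shanskii's geometry over free Burnside groups: take a finitely generated infinite simple periodic group $S$ containing a subgroup $\mathbb{Z}/p$ (a Tarski monster for large $p$, and a suitable simple periodic overgroup of $\mathbb{Z}/p$ in general), fix a recursive presentation of $S$, and interleave into it finitely many further relators that already force the profinite completion to be trivial, obtaining a finitely presented $\Gamma_0$ that contains $\mathbb{Z}/p$, surjects onto $S$ injectively on $\mathbb{Z}/p$, and itself has no non-trivial finite quotient; keeping graded small-cancellation control while killing all finite quotients is the delicate step. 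For the univariate refinement of (a) one arranges $\Gamma_0$ to be two-generated with one generator of order $p$ played by $g_0$, so that $\Gamma$ has a single new generator over $G$; the analogous remark applies in (b).

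Parts (b) and (c) follow the same template with different obstructions. For (b) replace ``finite'' by ``decidable conjugacy problem'' and ``periodic'' by ``decidable word problem''; as the former class is not subgroup-closed, (ii) now reads: no quotient of $\Gamma_R$ injective on $G$ embeds into a group with decidable conjugacy problem. I would take a finitely presented group $M$ with decidable word problem and undecidable conjugacy problem (Collins, Miller), chosen rigid -- say finitely presented and simple, or equipped with a distinguished malnormal finitely generated subgroup whose conjugacy classification encodes a complete recursively enumerable set and stays conjugacy-separated in overgroups -- and build $\Gamma$ so that $G$ embeds, $\Gamma$ has decidable word problem (witnessing (i)), and $M$ is forced to sit, undistorted and with its conjugacy relation intact, inside every overgroup realising the system; such an overgroup then inherits undecidable conjugacy problem, giving (ii). Making this inheritance hold for all overgroups of $M$, not merely for $M$ itself, is the hard point here. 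For (c), local indicability is subgroup-closed, so the clean template applies: take a finitely presented left-orderable group $L$ with trivial abelianization (such groups exist, e.g.\ among finitely presented simple left-orderable groups) -- any non-trivial quotient of $L$ is again perfect and finitely generated, hence not locally indicable -- observe that a non-trivial locally indicable $G$ is torsion-free, pick $1\neq g_0\in G$ and an infinite-order $t\in L$, and set $\Gamma:=G\ast_{\langle g_0\rangle=\langle t\rangle}L$ with the embedding of $\langle t\rangle$ chosen so that this amalgam of left-orderable groups is again left-orderable (for instance, with $\langle t\rangle$ relatively convex in a left-order of $L$). Then $\Gamma\in\L$ contains $G$, giving (i); and any quotient of $\Gamma$ injective on $G$ contains a non-trivial quotient of $L$, which is a finitely generated non-locally-indicable subgroup, so that quotient is not locally indicable, giving (ii).
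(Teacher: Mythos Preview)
Your template---build a group $\Gamma$ finitely presented over $G$ satisfying (i) and (ii)---is sound and is essentially a repackaging of the paper's quasi-identity mechanism (their Theorem~\ref{Thm:Simple}). For part (c) your argument is complete and close to the paper's: they use Bergman's perfect left-orderable group $\langle x,y,z\mid x^2=y^3=z^7=xyz\rangle$ in place of your simple $L$, and instead of amalgamating along a cyclic subgroup they use the ``simplicity'' of the class of left-orderable groups (their Definition~\ref{Def:Simple} and Proposition~\ref{Prop:Simple}(e)) to write a fixed $g\in G\setminus\{1\}$ as a product of conjugates of a nontrivial element of the witness; the two linking devices are interchangeable here.

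Parts (a) and (b), however, contain genuine gaps at exactly the points you flag as ``the real difficulty''. In (a) you need a finitely presented $\Gamma_0$ with trivial profinite completion, a copy of $\ZZ_p$, and an infinite periodic quotient into which this $\ZZ_p$ survives. Your proposed construction---start from a recursive presentation of a simple periodic group and ``interleave finitely many further relators that already force the profinite completion to be trivial'' while ``keeping graded small-cancellation control''---is not a proof; no known construction does this, and your own observation about Deligne/Abels shows why it is delicate. In (b) you need, in effect, a finitely presented group with decidable word problem whose undecidable conjugacy problem is inherited by \emph{every} overgroup; you name this as ``the hard point'' but do not resolve it. Similarly, the univariate refinement (``arrange $\Gamma_0$ to be two-generated with one generator of order $p$'') is asserted, not proved.

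The paper avoids all three difficulties by working one level up. Rather than constructing a bespoke $\Gamma_0$ or $M$ for each $G$, they prove once that the ambient class $\L$ is \emph{simple} (any $g\in G$ lies in the normal closure of any $1\ne h\in H$ inside some $K\in\L$ containing $G$ and $H$), and then only need a single quasi-identity holding in $\K$ but failing in $\L$. The quasi-identities are supplied by existing deep results: for (a), Ivanov's theorem that for suitable odd $n$ there is an infinite finitely generated group of period $n^3$ outside the quasivariety generated by finite groups; for (b), Darbinyan's group with decidable word problem that embeds into \emph{no} group with decidable conjugacy problem, combined with a Thompson--Boone--Higman embedding into a simple subgroup of a finitely presented group with decidable word problem. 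Darbinyan's theorem is precisely the ``inheritance'' statement you were missing. Finally, the univariate reduction is achieved uniformly, not by tailoring $\Gamma_0$: a small-cancellation substitution $x_i\mapsto at^{20i+1}\cdots at^{20i+20}$ (their Proposition~\ref{Prop:reduction}) turns any system into a one-variable system solvable in a group hyperbolic relative to the original solution group, and for (a) a further periodic-quotient result for relatively hyperbolic groups (their Corollary~\ref{Cor:T}) brings the solution back into the periodic class.
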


Our approach relies on an elementary yet useful result (Theorem
\ref{Thm:Simple}), which serves as a meta-theorem for deriving claims as
in the theorem above. The reduction of the number of unknowns to one in
parts (a) and (b) is achieved by utilizing recent advances in the theory
of relatively hyperbolic groups. For details, we refer the reader to
Section \ref{Sec:Red}.

In the course of proving our main results, we also obtain a partial answer
to an old question of B.~Neumann. Recall that an \emph{amalgam $(A,B;C)$
of groups $A$ and $B$ with intersection $C$} is a pair of groups $A$ and
$B$ such that $C=A\cap B$ is a subgroup in both $A$ and $B$ and the
multiplications on $C$ (inherited from $A$ and $B$) coincide. In
\cite{Neu60}, B. Neumann noted that not every amalgam of periodic groups
embeds into a periodic group and asked whether every amalgam of periodic
groups with finite intersection embeds into a periodic group. In the last
Section \ref{Sec:Period}, we obtain an affirmative answer to this question
for countable groups.

\begin{thm}\label{Thm:AmEmb}
For any amalgam $(A,B;C)$ of countable periodic groups with finite
intersection $C$, there exists a periodic group containing $A$ and $B$ as
subgroups such that $A\cap B=C$.
\end{thm}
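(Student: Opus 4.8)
The starting point is the amalgamated free product $P_{0}=A\ast_{C}B$. Since $C=A\cap B$ is finite, $P_{0}$ is hyperbolic relative to the pair $\{A,B\}$, it contains $A$ and $B$ with $A\cap B=C$, and --- because in a relatively hyperbolic group every element of infinite order is either loxodromic or conjugate into a peripheral subgroup, while the peripheral subgroups $A$, $B$ here are periodic --- every element of infinite order in $P_{0}$ is loxodromic. So the plan is to kill the loxodromic elements one at a time while leaving $A$, $B$, and $C$ untouched, and then pass to a direct limit.

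The basic move would be the Dehn filling / small cancellation theorem over relatively hyperbolic groups (Osin; Groves--Manning; Dahmani--Guirardel--Osin): if $G$ is hyperbolic relative to $\{H_{1},H_{2}\}$ and $g\in G$ is loxodromic, then for all sufficiently large $n$ the epimorphism $G\twoheadrightarrow \bar G:=G/\nc{g^{n}}$ restricts to an embedding of $H_{1}$ and of $H_{2}$, preserves the peripheral structure (so that $\bar H_{1}\cap\bar H_{2}$ is exactly the image of $H_{1}\cap H_{2}$), sends $g$ to an element of order dividing $n$, and has target again hyperbolic relative to $\{\bar H_{1},\bar H_{2}\}$. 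Applied with $\{H_{1},H_{2}\}=\{A,B\}$, one such step replaces a group of the kind at hand --- countable, hyperbolic relative to periodic peripheral subgroups $A$, $B$, with $A\cap B=C$ --- by a quotient with the same structure in which one chosen loxodromic element has become torsion; so the move can be iterated.

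I would then build $P_{0}\twoheadrightarrow P_{1}\twoheadrightarrow\cdots$ by a diagonal argument over the (countable) element sets of the successive $P_{i}$, arranged so that every element of every $P_{i}$ produced along the way is eventually \emph{considered}. When an element $p$ of an already-built $P_{i}$ is considered, look at its image $\bar p$ in the group $P_{m}$ reached so far: if $\bar p$ has finite order, do nothing; otherwise $\bar p$ is loxodromic, and pass to $P_{m+1}=P_{m}/\nc{\bar p^{\,n}}$ for suitably large $n$, after which $p$ has finite-order image. If only finitely many surgeries occur, the last group already has every element of finite order and we are done; otherwise set $P_{\infty}=\varinjlim P_{i}$. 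Any $x\in P_{\infty}$ is the image of some element of some $P_{i}$, which is considered at some stage, after which its image --- and hence $x$ --- has finite order, so $P_{\infty}$ is periodic. Since $A$ and $B$ embed into each $P_{i}$ compatibly with the maps $P_{i}\to P_{i+1}$, they embed into $P_{\infty}$; and if $a\in A$ and $b\in B$ have the same image in $P_{\infty}$, they have the same image already in some $P_{i}$, where $\bar A\cap\bar B$ is the image of $C$, forcing $a=b\in C$. Thus $P_{\infty}$ would be a periodic group containing $A$ and $B$ with $A\cap B=C$.

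The step that does the real work --- and the one I would expect to require the most care in citing in exactly the needed form --- is the input recalled in the second paragraph: performing a single loxodromic Dehn filling over a relatively hyperbolic group so as to preserve the whole peripheral structure, keeping the peripheral subgroups embedded and not merging them beyond their current intersection, while still killing a power of a prescribed loxodromic element. Granting that, what remains is bookkeeping: arranging the diagonal enumeration so that the direct limit is genuinely periodic even though each surgery creates new loxodromic elements, and noting that this enumeration is precisely where countability is used. The two facts invoked at the start --- that $A\ast_{C}B$ is hyperbolic relative to $\{A,B\}$ when $C$ is finite, and that relative hyperbolicity with periodic peripheral subgroups leaves no infinite-order elements outside the loxodromic ones --- are standard.
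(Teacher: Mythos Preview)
Your proposal is correct and follows essentially the same route as the paper: form $A\ast_C B$, note it is hyperbolic relative to $\{A,B\}$ since $C$ is finite, then iterate Dehn fillings of loxodromic elements (the paper's Lemma~\ref{Lem:Q}) and pass to a direct limit (the paper's Corollary~\ref{Cor:T}), using periodicity of the peripheral subgroups to guarantee every infinite-order element is loxodromic. One minor simplification the paper makes over your sketch: since each $P_i$ is a quotient of $P_0$, it suffices to enumerate the elements of $P_0$ once rather than run a diagonal enumeration over all the $P_i$; and the paper phrases the Dehn-filling output as ``injective on $A\cup B$'' (as a set), which directly yields $\bar A\cap\bar B=C$ without a separate check.
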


It is worth noting that modern techniques make the proof of Theorem
\ref{Thm:AmEmb} easy. However, the general case of B. Neumann's problem
(without cardinality restrictions) appears to be significantly harder.

%AK Perenes sleduyushchiy abzac suda is paragrafa 2 (i dobavil koe-chto)
Throughout this paper, we use the following standard notation. For any
$k\in \ZZ$ and  elements $x$ and $y$ of a certain group $G$, we denote by
$x^{ky}$ the element $y^{-1}x^ky$; in particular, we write 
$x^y$ 
%AK
and $x^{-y}$
for
$y^{-1}xy$ 
%AK
and $y^{-1}x^{-1}y$, respectively.
The commutator $x^{-1}y^{-1}xy$ is denoted 
by $[x,y]$.  By $\la S\ra$ and $\ll S\rr^G$ we denote the subgroup 
generated by a set $S\subseteq G$ and the normal closure of~$S$ in $G$, 
respectively. A a cyclic group 
%AK of 
%% 
order $n\in \NN\cup \{\infty \}$ 
generated by an element $x$ is denoted by $\la x\ra _n$.
%AK
The symbol $\ZZ_n$ denotes the group $\ZZ/n\ZZ$.

\paragraph{Acknowledgments.}
The authors thank A.Yu. Olshanskii and F. Fournier-Facio for their valuable remarks.
%AK
The second author thanks the Theoretical Physics and Mathematics
Advancement Foundation ``BASIS".

\section{Simple classes of groups}\label{Sec:Simple}

We begin by introducing the notion of a simple class, which plays a
central role in our paper.

\begin{defn}\label{Def:Simple}
Let $\L$ be a class of groups. We say that $\L$ is \emph{simple} if for
any groups $G,H\in \L$, any $g\in G$, and any $h\in H\setminus \{ 1\}$,
there exists a group $K\in\L$ containing $G$ and $H$ as subgroups such
that $g\in\ll h\rr^K$.
\end{defn}

Below we provide some examples of simple classes relevant to our work. In
this paper, we always assume groups with solvable word or conjugacy
problems to be finitely generated. Recall also that a periodic group $G$
is a \emph{$\pi$-group}, where $\pi$ is a set of primes, if the order of
any element of $G$ decomposes as a product of elements of $\pi$. Finally,
we say that a group $G$ is \emph{free-group-free} if $G$ contains no
non-cyclic free subgroups.

\begin{prop}\label{Prop:Simple}
The following classes of groups are simple:
%AK Ubral bol'shie burvy i zamenil .-->; (i .-->: strochkoy vyshe)
\begin{enumerate}
\item[(a)] all groups;
\item[(b)] groups with decidable word problem;
\item[(c)] torsion-free groups;
\item[(d)] periodic groups and, more generally, $\pi$-groups;
where $\pi$ is an arbitrary set of primes;
\item[(e)] left-orderable groups;
\item[(f)] amenable groups;
\item[(g)] free-group-free groups;
\item[(h)] solvable groups.
\end{enumerate}
\end{prop}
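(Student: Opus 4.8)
**The plan is to treat each item separately, but in every case the key construction will be some form of amalgamated free product or HNN-type extension adapted to the class in question.**

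For the proof of Proposition~\ref{Prop:Simple}, the overarching strategy is as follows. Given $G, H \in \L$, $g \in G$, and $h \in H \setminus \{1\}$, I want a group $K \in \L$ containing both $G$ and $H$ with $g \in \ll h \rr^K$. The first move, valid whenever $\L$ is closed under passing to subgroups of suitable overgroups, is to form the free product $G * H$: inside it, $g$ and $h$ are ``independent,'' and one can try to add a single stable letter $t$ conjugating some power or product into the normal closure of $h$. More precisely, consider an HNN extension of $G * H$ (or of a larger group) with a stable letter $t$ such that $t^{-1} g t = h$ or, more robustly, such that $g$ equals a product of conjugates of $h$; then automatically $g \in \ll h \rr^K$. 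So the real content is: for each class in the list, verify that this amalgam/HNN construction (or a mild variant) stays inside the class.

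I would organize the verifications as follows. \textbf{(a)} is immediate: take $K$ to be any group containing $G*H$ in which $g$ becomes a product of conjugates of $h$; e.g.\ the quotient of $(G*H)*\langle t\rangle_\infty$ forcing $g = t^{-1}ht$ exists abstractly. Actually the cleanest uniform choice for (a), (c), (e), (f), (g), (h) is: let $N = \ll h\rr^{G*H}$, observe $G*H = (G*H)$, and then inside $G*H$ replace it by a group where $g \in N$; the standard trick is to use the \emph{free product with amalgamation} $L = (G * H) *_{H} \widehat H$ where $\widehat H \supseteq H$ is chosen so that the original $g$ (transported along an embedding $G \hookrightarrow \widehat H$ sending $g$ into $\ll h\rr^{\widehat H}$) gets identified appropriately — but this is circular unless we already know the target class has such $\widehat H$. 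The honest approach, which I expect the authors take, is: for (c) torsion-free, use that free products and HNN extensions of torsion-free groups along torsion-free (here trivial or cyclic) subgroups are torsion-free, so $\langle G * H, t \mid t^{-1}gt = h\rangle$ works when $g,h$ have infinite order, with a preliminary reduction otherwise. For (e) left-orderable: free products of left-orderable groups are left-orderable, and HNN extensions of left-orderable groups over left-orderable subgroups are left-orderable, so the same construction applies. For (f) amenable and (h) solvable: free products are \emph{not} amenable or solvable, so here one instead uses the fact that every amenable (resp.\ solvable) group embeds into an amenable (resp.\ solvable) group in which any given element of infinite order becomes a product of conjugates of any other given nontrivial element — invoke, e.g., that the wreath product or a suitable extension built from $G$ and $H$ inside the class does the job; the relevant known embedding theorems (B.\,H.\,Neumann, P.\,Hall) give amenable/solvable overgroups where one kills nothing but gains the needed conjugacy relation.

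\textbf{(b), (d), (g)} require the class-specific input. For \textbf{(b)} groups with decidable word problem: here $G, H$ are finitely generated with solvable word problem; form $\langle G * H, t \mid t^{-1}gt = h^{\pm 1}\rangle$ — this is an HNN extension of $G*H$ (which has solvable word problem, being a free product of two such groups) with finitely generated associated subgroups $\langle g\rangle$ and $\langle h\rangle$ whose membership problems are solvable, so by the solution to the word problem for HNN extensions (Britton's Lemma plus decidability of the associated-subgroup membership), $K$ has solvable word problem; one must handle the cases where $g$ or $h$ has finite order by instead declaring $g = (h)(h^{u})\cdots$ for finitely many conjugators, still keeping everything effective. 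For \textbf{(d)} periodic / $\pi$-groups: free products and HNN extensions destroy periodicity, so the construction must be replaced by an embedding into a periodic group — this is exactly where Theorem~\ref{Thm:AmEmb} (or rather the techniques behind it, e.g.\ small-cancellation or relatively hyperbolic embeddings à la Obraztsov/Ol'shanskii) enters: embed $G * H$ (or $G$ and $H$ with appropriate identifications) into a periodic ($\pi$-)group $K$ in which $g$ and $h$ are conjugate, hence $g \in \ll h\rr^K$; the $\pi$-refinement needs the embedding to preserve the prime set, which the Ol'shanskii-type periodic quotient constructions do. For \textbf{(g)} free-group-free groups: a free product of two nontrivial groups (unless both have order $2$) contains a nonabelian free subgroup, so again one cannot use $G * H$; instead embed $G$ and $H$ into a free-group-free group where $g$ and $h$ are conjugate — invoke the existence of ``verbally complete'' or simple-like free-group-free overgroups (e.g.\ Ol'shanskii's construction of a finitely generated infinite group all of whose proper subgroups are cyclic, or more flexibly an embedding into a suitable torsion or Tarski-type monster, or the fact that any countable group embeds into a $2$-generated group with no free subgroups via Obraztsov's embedding theorem), and arrange $g,h$ to become conjugate there.

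\textbf{The main obstacle} I anticipate is not any single item but the fact that the ``generic'' amalgam/HNN argument that settles (a), (c), (e), (f), (h) fails precisely for (b), (d), (g), where free products leave the class. For (b) the fix is routine effective combinatorial group theory; the genuinely nontrivial cases are (d) and (g), where one must cite a substantial embedding theorem producing a group in the class with prescribed conjugacy among prescribed elements — the periodic case drawing on the same circle of ideas as Theorem~\ref{Thm:AmEmb}, and the free-group-free case on Ol'shanskii–Obraztsov-style constructions. A secondary, purely bookkeeping nuisance in every case is the disjunction on whether $g$ has finite or infinite order and whether $g = 1$: when $g = 1$ the statement is trivial, and when $g$ has finite order one replaces the single relation $t^{-1}gt = h$ by expressing $g$ as a product of several conjugates of $h$ (possible once one is willing to pass to a large enough overgroup in the class), so that the ``$g \in \ll h\rr^K$'' conclusion still holds.
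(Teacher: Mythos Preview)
Your HNN/amalgam approach is essentially what the paper does for (c) and (e): it takes the amalgamated free product $G \ast_{\la g\ra = \la h\ra} H$ (citing Bludov--Glass for left-orderability), which is the undoubled version of your construction. For (a) and (b) the paper takes a different route --- it embeds $G \times H$ into a simple group (via Boone--Higman for (b)) --- though your HNN argument for (b) could likely be pushed through with some care about the case $|g| \ne |h|$.

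The genuine gaps are in (d), (f), (g), (h). You correctly diagnose that free products leave these classes, but your proposed repairs do not go through. For (d) with a general set of primes $\pi$, you assert that Ol'shanskii-type periodic quotients preserve the prime set; the paper explicitly remarks that the analogue of Ol'shanskii's simple-periodic embedding for $\pi$-groups is \emph{unknown}, so this tool is simply not available. For (g), Tarski monsters and Obraztsov-type embeddings are the wrong tools: the former are torsion (hence cannot contain an arbitrary free-group-free $G$, which may have elements of infinite order), and the latter do not control free subgroups of the ambient group. The paper handles all four cases (d)-$\pi$, (f), (g), (h) by a single elementary device that your outline misses entirely: first enlarge $G$ within the class so that $g$ lands in $[G,G]$ --- for $\pi$-groups via the diagonal embedding $G \hookrightarrow G \wr \ZZ_{|\la g\ra|}$, and for (f), (g), (h) via a semidirect product $\widehat G \rtimes \ZZ$, where $\widehat G$ is the group of eventually-constant $\ZZ$-indexed $G$-sequences and $\ZZ$ acts by shift --- and then take $K = G \wr H$, invoking H.~Neumann's observation that the normal closure in $G \wr H$ of any nontrivial element of $H$ meets each base factor in its commutator subgroup. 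This wreath-product argument is both more elementary than the embedding theorems you reach for and, unlike them, actually applies to $\pi$-groups and free-group-free groups.
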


\begin{proof}
In what follows, we use the notations from Definition \ref{Def:Simple}.

\emph{(a)\;}
This follows immediately from the well-known fact that any group (e.g.,
$G\times H$) embeds into a simple group (see, for example, \cite{KaM79},
Section 13).

\emph{(b)\;}
Clearly, the word problem is decidable in $G\times H$ whenever it is
decidable in $G$ and $H$. By the Boone--Higman theorem, $G\times H$ (as
well as any other group with decidable word problem) embeds into a simple
subgroup $K$ of a finitely presented group (see \cite{BH74} or
\cite[Chapter IV, Theorem~7.4]{LS80}). It remains to note that a simple
subgroup of a finitely presented group has decidable word problem by the
same Boone--Higman theorem (in fact, this direction of the theorem is due
to Kuznetsov \cite{Ku58}, see also \cite[Chapter IV, Theorem 3.6]{LS80}).

\emph{(c)\;}
If $g=1$, then we have nothing to prove; otherwise, we can take $K$ to be
the free product with cyclic amalgamation
$K=G\ast_{\la g\ra =\la h\ra }H$.

\emph{(d)\;}
The class of all periodic groups can be treated similarly to (a) by
applying Olshanskii's theorem: \emph{any periodic group (in particular,
$G\times H$) embeds into a simple periodic group} \cite{Ols92}.

It is unknown whether
%AK
an analogue of
this theorem holds for $\pi$-groups, so a trickier
argument is needed.  First, note that the diagonal subgroup of the base of
the wreath product $\ZZ_n\wr\ZZ_n$ is contained in the commutator subgroup
of this wreath product. Therefore, we can assume that the element $g\in G$
lies in the commutator subgroup of $G$; indeed, it suffices to replace $G$
with a larger $\pi$-group~$G\wr\ZZ_{n}$, where
%AK
%$n=|g|$,
$n=|\gp g|$,
containing $G$ as
the diagonal subgroup of the base. Now, we can take $K$ to be the
restricted wreath product $G\wr H$ (containing $G$ as a direct factor of
the base), because the normal closure of any nontrivial subgroup of $H$ in
$G\wr H$ intersects each direct factor of the base by its commutator
subgroup (see \cite[Proposition 26.21]{Neu69}).

\emph{(e)\;}
This can be explained similarly to (c) since the class of left-orderable
groups is closed with respect to free products with cyclic amalgamations
\cite[Theorem B]{BG08}.

\emph{(f), (g), (h)\;}
For a given group $G$, consider the group of eventually constant functions
$$
\widehat G= \{f\colon \ZZ\to G\mid
\exists\, n\in\ZZ\; \exists\,
%AK
%a\in G\;\forall\, m>n\; f(m)=g {\;\rm and\;}  f(-m)=1\}.
b\in G\;\forall\, m>n\; f(m)=b {\;\rm and\;}  f(-m)=1\}.
$$
We represent elements of $\widehat G$ as bi-infinite sequences
$(\ldots, f(-2), f(-1), f(0); f(1), f(2), \ldots)$, where the semicolon
separates $f(0)$ from $f(1)$. The map $a\mapsto (\ldots 1,1, a; 1,1,
\ldots)$ defines an embedding of $G$ in $\widehat G$. Note that there is a
homomorphism ${\widehat G}\to G$ sending $f\in \widehat G$ to its
eventually constant value, whose kernel is the direct sum of copies of
$G$. Therefore, $\widehat G$ is amenable (respectively, free-group-free or
solvable) whenever so is $G$.

Further, consider the semidirect product $\widehat G\rtimes \ZZ$, where
%AK
%$\ZZ=\la z\ra$ acts by right shifts.
$\ZZ\iso\la z\ra$ acts by right shifts.
Again, it is easy to see that
$\widehat G\rtimes \ZZ$ is amenable (respectively, free-group-free or
solvable) whenever $G$ is. The commutator $[z,(\dots,1,1,g;g,\dots)]$
equals $(\ldots,1,1,g; 1,1,\ldots)$. Thus, replacing $G$ with
$\widehat G\rtimes \ZZ$ if necessary, we can assume that the element $g$
from Definition \ref{Def:Simple} lies in $[G,G]$. Now, we can take $K$ to
be the restricted wreath product $G\wr H$ and apply the (easy-to-prove)
fact from \cite{Neu69} mentioned in the proof of (d).
\end{proof}

Recall that a \emph{quasi-identity} is a first-order sentence of the form
\begin{equation}\label{Eq:q}
\forall x,y,\ldots\quad
\Big(w_1(x,y,\ldots)=1\;\&\;\ldots\;\&\;w_n(x,y,\ldots)=1\Big)\;
\Longrightarrow\; v(x,y,\ldots)=1,
\end{equation}
where $w_i$ and $v$ are words in the alphabet $\{x,y,\dots\}^{\pm 1}$. We
say that a quasi-identity holds in a certain class of groups if it is
satisfied in every group from this class.

Quasi-identities arise naturally in the context of equations over groups,
see \cite{K99, I05, KMR24}.  Standard facts about quasi-identities and
quasivarieties can be found in the books \cite{Bu02} and \cite{Go99}.

\begin{thm}\label{Thm:Simple}
Suppose that a quasi-identity (\ref{Eq:q}) does not hold in a simple class
of groups $\L$.
%AK
%Then
Then,
for every nontrivial group $G\in\L$, there exists a
finite system of equations over $G$ that is solvable in a group
$K\supseteq G$ from $\L$ but not solvable in any overgroup of $G$
satisfying (\ref{Eq:q}).
\end{thm}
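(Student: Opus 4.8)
The plan is to translate the failure of the quasi-identity~(\ref{Eq:q}) in $\L$ into a concrete witness inside a single group of $\L$, and then use simplicity to ``pull $G$ in'' and build the separating system. First I would unpack what it means for~(\ref{Eq:q}) to fail in $\L$: there is a group $H_0\in\L$ and a tuple $(a,b,\ldots)$ of elements of $H_0$ such that $w_i(a,b,\ldots)=1$ for all $i=1,\ldots,n$ but $h:=v(a,b,\ldots)\neq 1$. Let $H$ be the subgroup of $H_0$ generated by the finitely many elements $a,b,\ldots$; since $H\le H_0$, $H$ still lies in $\L$ (classes of groups as used here should be subgroup-closed, or at least one passes to the relevant subgroup-closure; in any case $H$ witnesses the same thing), and $H$ contains a distinguished nontrivial element $h=v(a,b,\ldots)$ expressed via the relators $w_i$.

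Next, given a nontrivial $G\in\L$, pick any nontrivial $g\in G$. By Definition~\ref{Def:Simple} applied to the pair $G,H\in\L$ and the elements $g\in G$, $h\in H\setminus\{1\}$, there is a group $K\in\L$ with $G,H\le K$ and $g\in\ll h\rr^K$. Concretely, $g$ is a product of conjugates of $h^{\pm1}$ in $K$: there exist $k_1,\ldots,k_r\in K$ and signs $\e_j\in\{\pm1\}$ with $g=\prod_{j=1}^r (h^{\e_j})^{k_j}$. Now I would write down the separating system over $G$ in the unknowns $x$ (corresponding to the images of $a,b,\ldots$) together with auxiliary unknowns $t_1,\ldots,t_r$ (for the conjugators $k_j$): the equations are $w_i(x,y,\ldots)=1$ for $i=1,\ldots,n$, together with the single equation $g^{-1}\cdot\prod_{j=1}^r \bigl(v(x,y,\ldots)^{\e_j}\bigr)^{t_j}=1$, where $g$ on the left is the coefficient from $G$. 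By construction this system has a solution in $K\supseteq G$ (send the unknowns to $a,b,\ldots$ and $k_1,\ldots,k_r$), so it is solvable in $\L$.

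It remains to check that the system is \emph{not} solvable over $G$ in any overgroup $\widetilde G\supseteq G$ satisfying~(\ref{Eq:q}). Suppose it were: then in some such $\widetilde G$ there are elements $\bar a,\bar b,\ldots,\bar t_1,\ldots,\bar t_r$ with $w_i(\bar a,\bar b,\ldots)=1$ for all $i$. Since~(\ref{Eq:q}) holds in $\widetilde G$, the premise of the quasi-identity is met, so its conclusion forces $v(\bar a,\bar b,\ldots)=1$. But then the last equation reads $g^{-1}\cdot\prod_j (1)^{\bar t_j}=g^{-1}=1$ in $\widetilde G$, forcing $g=1$ in $\widetilde G$; since $G\hookrightarrow\widetilde G$ and $g\neq1$ in $G$, this is a contradiction. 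Hence no such $\widetilde G$ exists, which is exactly the required non-solvability.

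The system is finite ($n+1$ equations) in finitely many unknowns, so this completes the argument. The only genuinely delicate point — and the one I would state carefully — is the passage from ``some $K\in\L$ with $g\in\ll h\rr^K$'' to the finite product expression $g=\prod_j (h^{\e_j})^{k_j}$ with finitely many conjugators: this is immediate from the definition of normal closure as the set of finite products of conjugates of $h^{\pm1}$, and it is precisely this finiteness that lets the last equation be a single equation in finitely many unknowns. Everything else is bookkeeping, provided the ambient classes $\K,\L$ behave reasonably with respect to subgroups; if a class in the intended applications is not literally subgroup-closed, one replaces $H_0$ by the relevant finitely generated witness living in the class, which the examples of Proposition~\ref{Prop:Simple} all accommodate.
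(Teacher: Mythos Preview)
Your proof is correct and follows essentially the same approach as the paper's. One unnecessary detour: you pass to the subgroup $H\le H_0$ generated by the witnesses $a,b,\ldots$ and then worry about whether $\L$ is subgroup-closed, but this step is not needed at all---Definition~\ref{Def:Simple} applies directly to the pair $G,H_0\in\L$ with the nontrivial element $h=v(a,b,\ldots)\in H_0$, so the argument runs verbatim with $H_0$ in place of $H$ and no hypothesis on subgroup-closure is required.
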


\begin{proof}
Suppose that the quasi-identity (\ref{Eq:q}) is violated by elements
$\tilde x,\tilde y,\dots\in H\in\L$. Consider an element $g\in
G\setminus\{ 1\}$. By the definition of simplicity applied to
$h=v(\tilde x,\tilde y,\ldots)\ne 1$,
groups $G$ and $H$ embed into a group $K\in\L$
so that
$
%AK
%g=\prod_i v(\tilde x,\tilde y,\ldots)^{\tilde z_i}
g=\prod_i v(\tilde x,\tilde y,\ldots)^{\pm\tilde z_i}
$
for some elements $\tilde z_i\in K$. Thus, the system of equations
$$
\left\{w_1(x,y,\ldots)=1,\;\dots,\;w_n(x,y,\dots)=1,\;
%AK
%\prod\limits_i v(x,y,\dots)^{z_i}=g
\prod\limits_i v(x,y,\dots)^{\pm z_i}=g
\right\}
$$
over $G$ (with unknowns $x,y,\dots,z_1,z_2,\dots$) has an obvious solution
in $K\supseteq G$, but surely cannot have any solutions in a group
satisfying the quasi-identity (\ref{Eq:q}).
\end{proof}

The following examples show  that neither the assumption of simplicity of
$\L$ nor the condition on the quasi-identity can be dropped.

\begin{itemize}
\item
Let $\L$ be the (non-simple) class $\{\ZZ_2,\ZZ_3\}$. The quasi-identity
$x^3=1\; \Rightarrow\; x=1$ is satisfied in $G=\ZZ_2$ but not in $\ZZ_3$.
However, any system of equations with coefficients in $G$ solvable in $\L$
is solvable already in $G$.

\item
Let $G$ be a simple group. Clearly, the class $\L=\{G\}$ is simple, but
any system of equations with coefficients in $G$ solvable in $\L$ is
solvable in $G$. A less trivial example illustrating the same point can be
obtained by taking $\L=\{ G\}$, where $G$ is a nontrivial
\emph{algebraically closed} group (i.e., a group $G$ containing a solution
to any finite system of equations solvable over $G$, see \cite{Sc51,
LS80}). In particular, Theorem \ref{Thm:Simple} implies that \emph{a
nontrivial algebraically closed group satisfies no nontrivial
quasi-identity.}
\end{itemize}

The following simple observation will be instrumental in applying Theorem
\ref{Thm:Simple}.

\begin{lem}\label{Lem:qi}
Let $\K$ be a class of groups. Suppose that there exists a finitely
presented group $Q$ and an element $q\in Q\setminus \{1\}$ such that $q$
is contained in the kernel of  
every homomorphism 
from $Q$ to any group 
%AK
%in
from
$\K$. Then there exists a quasi-identity that holds in every group from
$\K$ but fails in $Q$.
\end{lem}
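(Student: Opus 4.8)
The plan is to transcribe the finite presentation of $Q$, together with a word representing $q$, directly into a quasi-identity of the form (\ref{Eq:q}). First I would fix a finite presentation $Q=\gp{x_1,\dots,x_k\mid r_1,\dots,r_n}$, where each $r_i$ is a word in the alphabet $\{x_1,\dots,x_k\}^{\pm1}$, and choose a word $v=v(x_1,\dots,x_k)$ representing the given nontrivial element $q\in Q$. The key observation is the standard dictionary between homomorphisms out of a finitely presented group and tuples satisfying the relators: a homomorphism $\phi\colon Q\to H$ is the same datum as a tuple $(h_1,\dots,h_k)\in H^k$ with $r_i(h_1,\dots,h_k)=1$ for all $i$ (namely $h_j=\phi(x_j)$), and under this correspondence $\phi(q)=v(h_1,\dots,h_k)$.

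Given this, the hypothesis on $Q$ and $q$ translates verbatim into the assertion that the quasi-identity
$$
\forall x_1,\dots,x_k\quad\Big(r_1(x_1,\dots,x_k)=1\;\&\;\dots\;\&\;r_n(x_1,\dots,x_k)=1\Big)\;\Longrightarrow\; v(x_1,\dots,x_k)=1
$$
holds in every group from $\K$: for any $H\in\K$ and any tuple $(h_1,\dots,h_k)\in H^k$ satisfying all the premises, the corresponding homomorphism $\phi\colon Q\to H$ sends $q$ to $v(h_1,\dots,h_k)$, and by assumption $q\in\ker\phi$, so $v(h_1,\dots,h_k)=1$. It remains to check that this quasi-identity fails in $Q$ itself, which is immediate: substituting for $x_j$ the generator $x_j$ of $Q$ satisfies all premises $r_i=1$ (they are defining relators), while the conclusion $v=1$ fails since $v$ represents $q\ne 1$.

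There is essentially no obstacle here; the lemma is a pure translation between the language of finite presentations and the syntax of quasi-identities. The only point one must not overlook is that a \emph{finite} presentation has finitely many relators, so that the premise of the quasi-identity is a finite conjunction, as required by the shape (\ref{Eq:q}) --- this is exactly where the finite presentability of $Q$ is used.
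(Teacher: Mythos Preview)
Your proof is correct and follows exactly the paper's approach: write down a finite presentation of $Q$, pick a word $v$ for $q$, and take the quasi-identity $(r_1=1\;\&\;\dots\;\&\;r_n=1)\Rightarrow v=1$. Your version simply spells out the verification (via the presentation--homomorphism correspondence) that the paper leaves implicit.
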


\begin{proof}
Let $Q=\la x_1,\dots,x_n\mid w_1=1,\dots,\;w_m=1\ra $ and let
$v$ be any word in the alphabet $\{ x_1, \ldots, x_n\}$
representing $q$. The quasi-identity
$
(w_1=1\;\&\dots\;\&\;w_m=1) \,\Rightarrow \, v=1
$
has the required property.
\end{proof}

\section{Reducing the number of unknowns}\label{Sec:Red}

In this section, we assume the reader to be familiar with the notion of
relative hyperbolicity. We accept the definition of a relatively
hyperbolic group given in \cite{Osi06a}, which does not impose any
cardinality restrictions. For a comprehensive exposition of various
(equivalent) definitions in the case of countable groups, we refer to
\cite{Hru}. Most basic facts about relatively hyperbolic groups used below
can be found in \cite{DGO17,Osi06a,Osi07}. The main goal of this section
is to prove the following.

\begin{prop}\label{Prop:reduction}
Let $G$ be a group, $a\in G\setminus\{ 1\}$, and let $\mathcal A=\{ w_i=1
\mid i=1, \ldots, m\}$ be a system of equations with coefficients in $G$
and unknowns $x_1, \ldots, x_n$.  Further, let $\mathcal B$ be the system
with one unknown $t$ obtained from $\mathcal A$ by making the
substitutions
$$
x_i=at^{20i+1}at^{20i+2}\cdots at^{20i+20}, \;\;\; i=1, \ldots, n,
$$
in all the equations of $\mathcal A$. If $\mathcal A$ has a solution in a
group $K$ containing $G$, then $\mathcal B$ has a solution in a group
hyperbolic relative to $K$.
\end{prop}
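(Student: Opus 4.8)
The plan is to realize the prescribed values of the substituted words inside a small-cancellation quotient of a free product. Fix a solution $(g_1,\dots,g_n)\in K^n$ of $\mathcal A$. It suffices to produce a group $L$ that is hyperbolic relative to $K$ (in particular, contains $K$) together with an element $t\in L$ for which
$$a t^{20j+1}a t^{20j+2}\cdots a t^{20j+20}=g_j\qquad\text{for }j=1,\dots,n.$$
Indeed, substituting $x_j\mapsto a t^{20j+1}\cdots a t^{20j+20}$ and evaluating at this $t$ turns each equation $w_i=1$ of $\mathcal B$ into $w_i(g_1,\dots,g_n)=1$, which holds since $(g_1,\dots,g_n)$ solves $\mathcal A$ in $K\subseteq L$; thus $t$ solves $\mathcal B$ in $L$, which is hyperbolic relative to $K$, as wanted.

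To construct $L$, I would set $P=K\ast\la z\ra_\infty$ and regard $P$ as hyperbolic relative to the collection $\{K,\la z\ra_\infty\}$ (a free product is hyperbolic relative to its free factors, trivially so if a factor is finite). Put
$$r_j=a z^{20j+1}a z^{20j+2}\cdots a z^{20j+20}g_j^{-1}\in P\qquad(j=1,\dots,n),$$
let $\mathcal R=\{r_1,\dots,r_n\}$, and take $L=P/\ll\mathcal R\rr^{P}$, writing $t$ for the image of $z$ in $L$. The displayed relations hold in $L$ by construction, so the whole matter reduces to showing that $L$ is hyperbolic relative to $K$ and that the natural map $K\to L$ is injective; both will follow from the theory of small cancellations over relatively hyperbolic groups (due to Osin) once the small-cancellation hypotheses are checked.

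Read each $r_j$, together with its cyclic conjugates and its inverse, as an alternating product of syllables from the two free factors: the $K$-syllables are all equal to $a$ except for a lone syllable coming from $g_j^{-1}$, while the $z$-syllables are the powers $z^{20j+1},\dots,z^{20j+20}$. The blocks of $20$ consecutive exponents do the essential work: within a single $r_j$ these twenty $z$-syllables are pairwise distinct, and for $j\ne k$ the exponent sets $\{20j+1,\dots,20j+20\}$ and $\{20k+1,\dots,20k+20\}$ are disjoint, whereas $r_j^{-1}$ carries only negative exponents. Hence two members of the symmetrized set can share a $z$-syllable only if they are cyclic conjugates of one and the same $r_j^{\pm1}$, and two distinct cyclic conjugates of a single $r_j^{\pm1}$ can overlap only in an initial $K$-syllable, after which they differ at the next syllable. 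Consequently every piece (a common prefix of two distinct members of the symmetrized set) is essentially a single $K$-syllable, of relative length $1$, whereas each $r_j$ has relative length roughly $40$. So $\mathcal R$ satisfies the $C'(\lambda)$ small-cancellation condition over the relatively hyperbolic group $P$ with $\lambda$ comfortably below the threshold required by the relative small-cancellation theory, and the constant $20$ is chosen precisely to leave this margin. That theory then yields that $L$ is hyperbolic relative to the images of $K$ and $\la z\ra_\infty$, and that both peripheral subgroups embed in $L$, since no $r_j$ is conjugate into a peripheral subgroup of $P$ (each involves syllables from both free factors). Finally, the image of $\la z\ra_\infty$ is infinite cyclic, hence hyperbolic, and may be absorbed into the hyperbolic part, leaving $L$ hyperbolic relative to $K$ alone. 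As $K\hookrightarrow L$, this completes the argument up to routine checking.

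The hard part will be carrying out this small-cancellation verification carefully in the relative metric, passing to honestly cyclically reduced forms of the $r_j^{\pm1}$. I expect the main subtlety to be the degenerate situations in which the syllable coming from $g_j^{-1}$ cancels the leading $a$-syllable (most notably $g_j=a$), triggering a further round of cyclic reduction that fuses $z^{20j+20}$ and $z^{20j+1}$ into $z^{40j+21}$; one then has to verify that this fused exponent can clash with at most one exponent of a neighbouring block, so that pieces grow by only a bounded amount and the ratio stays under the threshold. One should also confirm that the minimal relator length $\gtrsim 40$ clears the length threshold in the relative small-cancellation theorem, which is unproblematic here because the hyperbolicity data of a free product carrying its free factors as peripheral subgroups is as tame as possible.
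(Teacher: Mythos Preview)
Your approach is correct and in fact produces the very same group $L$ as the paper, but reaches it by a different route. The paper first proves a standalone lemma: the map $\iota\colon G\ast F_n\to G\ast\ZZ$ given by the same words has the congruence extension property and makes $G\ast\ZZ$ hereditarily hyperbolic relative to $\iota(G\ast F_n)$. Applying this with $N=\ll w_1,\dots,w_m\rr$ yields a group $P$ hyperbolic relative to $R=(G\ast F_n)/N\hookrightarrow K$, and then the combination theorem for amalgamated products gives that $L=P\ast_R K$ is hyperbolic relative to $K$. You instead work directly in $K\ast\la z\ra$, impose the $n$ relations $r_j$, obtain $L$ hyperbolic relative to $\{K,\la z\ra\}$, and drop the cyclic peripheral. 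Unwinding the amalgam, the paper's $L$ is precisely $(K\ast\la t\ra)/\ll g_i^{-1}at^{20i+1}\cdots at^{20i+20}\rr$, i.e.\ your quotient, so only the bookkeeping differs: you trade the combination theorem for the (equally standard) fact that a hyperbolic peripheral subgroup may be removed.

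One simplification: you do not need Osin's small-cancellation theory over relatively hyperbolic groups. Since $P=K\ast\la z\ra$ is a genuine free product, classical $C'(1/6)$ over free products (Lyndon--Schupp, Chapter~V) already gives the embedding $K\hookrightarrow L$, and Greendlinger's lemma there yields a linear relative Dehn function, hence hyperbolicity of $L$ relative to $\{K,\la z\ra\}$; this is exactly how the paper argues in its auxiliary lemma. Your piece analysis is fine, including the degenerate cases $g_j=1$ and $g_j=a$; note that the paper's relators $\bar x_i^{-1}\bar a\,t^{20i+1}\cdots$ face the identical edge cases (when $\bar x_i=\bar a$ or $\bar x_i=1$ in $R$), which it subsumes under ``straightforward to verify''.
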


To prove Proposition \ref{Prop:reduction}, we will need a few auxiliary
definitions and results.

\begin{defn}
Recall that a subgroup $K$ of a group $H$  satisfies the \emph{congruence
extension property} if 
$\ll N\rr^H \cap K=N$ 
for every $N\lhd K$; note
that the image of $K$ in $H/\ll N\rr^H$ is naturally isomorphic to $K/N$
in this case. Further, a group $H$ is \emph{hereditarily hyperbolic
relative} to $K\le H$ if, for every $N\lhd K$, the quotient
group $H/\ll N\rr^H$ is hyperbolic relative to the natural image of $K$ in
$H/\ll N\rr^H$.
\end{defn}

\begin{ex}
\begin{enumerate}
\item[(a)]
If $H=K\ast L$, then $K$ satisfies the congruence extension property
and $H$ is hereditarily hyperbolic relative to $K$.
\item[(b)]
If $H=K\times L$, then $K$ satisfies the congruence extension property;
however, $H$ is not hereditarily hyperbolic relative to $K$ unless $L$
is hyperbolic and $K$ is finite.
%\item[(c)] If $H$ is a finite simple group, then $H$ is hereditarily hyperbolic relative to any
%$K\le H$; however, $K$ does not satisfy the congruence extension property unless $K=\{ 1\}$.
\end{enumerate}
\end{ex}

The proof of the following lemma makes use of the small cancelation theory
over free products of groups. For unexplained definitions and details, we
refer the reader to  Chapter 5 of \cite{LS80}.

\begin{lem}\label{Lem:GFn}
For any non-trivial group $G$ and any $n\in \NN$, the identity map
$id\colon G\to G$ extends to an embedding
$\iota\colon G\ast F_n \to G\ast \ZZ$ such that $\iota(G\ast F_n)$ has the
congruence extension property in $G\ast \ZZ$ and $G\ast \ZZ$ is
hereditarily hyperbolic relative to $\iota(G\ast F_n)$.
\end{lem}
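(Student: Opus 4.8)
The plan is to realize $G\ast F_n$ as a subgroup of $G\ast\ZZ=G\ast\la z\ra_\infty$ by choosing $n$ suitable words in $z$ and the coefficients, and then to invoke small cancellation theory over the free product to get the congruence extension property and hereditary relative hyperbolicity simultaneously. Concretely, fix a nontrivial element $a\in G$ and set
$$
y_i=az^{20i+1}az^{20i+2}\cdots az^{20i+20},\qquad i=1,\dots,n,
$$
the same substitution words that appear in Proposition \ref{Prop:reduction}. Define $\iota\colon G\ast F_n\to G\ast\ZZ$ to be the identity on $G$ and $x_i\mapsto y_i$. The first task is to check that $\iota$ is injective and that $\iota(G\ast F_n)=\la G, y_1,\dots,y_n\ra$. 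For this one argues that the $y_i$ together with $G$ freely generate a free product $G\ast\la y_1\ra_\infty\ast\cdots\ast\la y_n\ra_\infty$: reading any nontrivial reduced word in the $y_i^{\pm1}$ and elements of $G\setminus\{1\}$ in normal form over $G\ast\la z\ra_\infty$, one sees that the $z$-syllables between consecutive $a$'s have exponents in a controlled range, so no cancellation can collapse the word; this is a routine normal-form bookkeeping argument in $G\ast\ZZ$.

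The heart of the argument is the following observation: for $N\lhd G\ast F_n$, the quotient $(G\ast\ZZ)/\ll N\rr^{G\ast\ZZ}$ should be obtainable by imposing on $G\ast\ZZ$ a set of relators of the form $r=1$ where each $r$ is a normal form, written in terms of $a$ and powers of $z$ via the words $y_i$, of an element of $N$. Because each $y_i$ is a product of $20$ blocks $az^{k}$ with all the exponents $20i+1,\dots,20i+20$ distinct across different $i$ and large, any piece (maximal common subword of two cyclically reduced relators, in the sense of small cancellation over free products) is forced to lie inside a single block $az^k$ or to straddle very few blocks; by increasing the ``granularity'' one checks the $C'(\lambda)$ and $T(q)$ conditions for the presentation of the quotient over the free product $G\ast\la z\ra_\infty$. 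One then applies the small-cancellation package of \cite[Chapter 5]{LS80}, in the strengthened form giving relative hyperbolicity of the quotient with respect to the images of the free factors (as in \cite{Osi06a,Osi07}): the quotient is hyperbolic relative to the image of $G$, hence relative to the image of $\iota(G\ast F_n)=\la G,y_1,\dots,y_n\ra$, and the natural map $\iota(G\ast F_n)/N\hookrightarrow (G\ast\ZZ)/\ll N\rr^{G\ast\ZZ}$ is injective, which is exactly the congruence extension property. Taking $N=1$ recovers that $G\ast\ZZ$ itself is hyperbolic relative to $\iota(G\ast F_n)$.

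The main obstacle I expect is making the small cancellation estimates genuinely uniform in the arbitrary normal subgroup $N$: the relators defining the quotient can be arbitrarily long words in the $y_i$, so one must verify that the piece-length bounds depend only on the fixed block structure of the $y_i$ (the number $20$ and the arithmetic progression of exponents) and not on $N$. This is where the specific choice of exponents $20i+1,\dots,20i+20$ — pairwise distinct, consecutive, and separated by gaps of $20$ between different indices $i$ — does the work: it guarantees that distinct subwords of distinct $y_i^{\pm1}$ overlap in at most a bounded fraction of a block, so the metric small cancellation condition holds with a constant one can push below any prescribed $\lambda>0$ by (if necessary) replacing $20$ with a larger constant. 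Once this uniformity is in hand, the congruence extension property and hereditary relative hyperbolicity follow from the cited results essentially formally.
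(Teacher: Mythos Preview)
Your choice of embedding is exactly the paper's, but the small cancellation is aimed at the wrong presentation, and this is a genuine gap rather than a bookkeeping issue. You propose to verify $C'(\lambda)$ over $G\ast\la z\ra$ for a relator set built from elements of $N$ written out via the $y_i$. This set depends on $N$, and the uniformity you worry about really does fail: if $n\ge3$ and $N=\ll x_1x_2,\,x_1x_3\rr^{G\ast F_n}$, then the obvious relators $y_1y_2$ and $y_1y_3$ share the piece $y_1$, which is half of each relator; enlarging the constant $20$ does nothing here, since the overlap lives at the level of the $y_i$, not inside a single block $az^k$. One can sometimes dodge such overlaps by changing normal generators of $N$, but you give no mechanism for doing this systematically, and relators that are proper powers (e.g.\ $y_1^2$ when $x_1^2\in N$) create further trouble for the free-product $C'(1/6)$ hypothesis. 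Finally, even when the condition does hold, the standard conclusion is hyperbolicity relative to the free factor $G$, and your step ``hence relative to $\iota(G\ast F_n)$'' is an unjustified enlargement of the peripheral subgroup.

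The idea you are missing is to change the ambient free product before applying small cancellation. Writing $H=G\ast\la t\ra$ as $\la K,\,t\mid R_1,\dots,R_n\ra$ with $R_i=x_i^{-1}at^{20i+1}\cdots at^{20i+20}$, one gets for every $N\lhd K$
\[
H/\ll N\rr^H\;\cong\;\bigl((K/N)\ast\la t\ra\bigr)\big/\ll \bar R_1,\dots,\bar R_n\rr,
\]
so the relator set is now a \emph{fixed} collection of $n$ words, completely independent of $N$. Checking $C'(1/6)$ for $\{\bar R_1,\dots,\bar R_n\}$ over $(K/N)\ast\la t\ra$ is then a single routine computation (using only that the exponents $20i+j$ are pairwise distinct), and \cite[Ch.~V, Cor.~9.4 and Thm.~9.3]{LS80} give at once the embedding $K/N\hookrightarrow H/\ll N\rr^H$ (this is the congruence extension property) and a linear relative Dehn function, hence hyperbolicity of $H/\ll N\rr^H$ relative to the image of $K/N$ directly, in the sense of \cite{Osi06a}.
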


\begin{proof}
For brevity, we denote  $G\ast F_n$ and $G\ast \ZZ$ by $K$ and $H$,
respectively. We fix a basis $x_1, \ldots, x_n$ in $F_n$ and a generator
$t$ of $\ZZ$. Let $a$ be an arbitrary nontrivial element of $G$ and let
$\iota\colon K\to H$ be the homomorphism such that $\iota (g)=g$ for all
$g\in G$ and $$\iota(x_i)=at^{20i+1}at^{20i+2}\cdots at^{20i+20}$$
for $i=1, \ldots, n$.

For any $N\lhd K$, the quotient group $H/\ll N\rr^H$ is isomorphic to the
quotient of the free product $K/N \ast \langle t\rangle$ by the normal
closure of the elements
$$
R_i=\bar x_i^{-1}\bar at^{20i+1}\bar at^{20i+2}\cdots \bar at^{20i+20}, \;\;\; i=1, \ldots, n,
$$
where $\bar a =aN$ and $\bar x_i=x_iN$. It is straightforward to verify
that the symmetrization of the set $\{ R_1, \ldots, R_n\}$ satisfies the
$C^\prime(1/6)$ small cancellation condition over the free product
$K/N \ast \langle t\rangle$. By \cite[Chapter V, Corollary 9.4]{LS80},
this implies that $K/N$ naturally embeds in $H/\ll N\rr^H$. Thus, $\iota$
is an embedding and $\iota(K)$ satisfies the congruence extension property
in $H$.

Furthermore, the Greendlinger lemma for small cancellation quotients of
free products (see \cite[Theorem 9.3, Chapter 5]{LS80}) and the definition
of relative hyperbolicity in terms of the relative isoperimetric function
suggested in \cite{Osi06a} imply that $H/\ll N\rr^H$ is hyperbolic
relative to (the image of) $K/N$ under the natural homomorphism $K/N\ast
\langle t\rangle \to H/\ll N\rr^H$ for any $N\lhd  K$. Therefore, $H$ is
hereditarily hyperbolic relative to $\iota(K)$.
\end{proof}

In the proof of Proposition \ref{Prop:reduction} given below, we use the
following combination theorem for relatively hyperbolic groups. This
theorem was first proved in \cite[Theorem 0.1 (2)]{D03} in the particular
case of finitely generated groups. For the general case, we refer to
\cite[Corollary~1.5]{Osi06c}. (It is worth noting that the definition of
relative hyperbolicity via relative isoperimetric functions suggested in
\cite{Osi06a} makes this result essentially trivial.)

\begin{lem}\label{Lem:CT}
If a group $A$ is hyperbolic relative to a subgroup $B$ and $B\le C$, then
$A\ast_B C$ is hyperbolic relative to $C$.
\end{lem}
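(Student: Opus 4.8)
The plan is to adopt the definition of relative hyperbolicity from \cite{Osi06a}, under which --- as the parenthetical remark before the statement indicates --- the assertion is almost immediate. In that setup, to say that a group $H$ is hyperbolic relative to a subgroup $P$ means that $H$ admits a \emph{finite relative presentation} $\langle X\cup P\mid\mathcal R\rangle$, i.e. $X$ and $\mathcal R$ are finite (with $\mathcal R\subseteq F(X)\ast P$) and $H=(F(X)\ast P)/\langle\langle\mathcal R\rangle\rangle$, whose relative isoperimetric function is linear (this linearity being independent of the finite relative presentation); recall that in computing relative area, when one writes a word $w$ trivial in $H$ as a product of conjugates of elements of $\mathcal R^{\pm1}$ inside $F(X\cup P)$, one may insert, free of charge, arbitrarily many factors that are words over $P^{\pm1}$ trivial in $P$, and only the conjugates of $\mathcal R^{\pm1}$ are counted. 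Given this, I would (i) exhibit a finite relative presentation of $A\ast_B C$ relative to $C$, and (ii) bound its relative isoperimetric function by a linear function.

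For (i): since $A$ is hyperbolic relative to $B$, fix a finite relative presentation $A=\langle X\cup B\mid\mathcal R\rangle$ with linear relative isoperimetric function $\delta_A(m)\le\kappa m$. Using the inclusion $B\le C$ to regard the finite set $\mathcal R\subseteq F(X)\ast B$ as a finite subset of $F(X)\ast C$, a routine check --- matching the universal property of the free product $F(X)\ast C$ with that of the amalgam $A\ast_B C$, and using that $F(X)\ast B\hookrightarrow F(X)\ast C$ --- shows that the two evident homomorphisms between $(F(X)\ast C)/\langle\langle\mathcal R\rangle\rangle$ and $A\ast_B C$ are mutually inverse. Hence $\langle X\cup C\mid\mathcal R\rangle$ is a relative presentation of $A\ast_B C$ relative to $C$, and it is finite since $X$ and $\mathcal R$ are.

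For (ii): let $w$ be a word over $(X\cup C)^{\pm1}$ with $w=1$ in $A\ast_B C$ and relative length $n$. First absorb every $C$-letter of $w$ lying in $B$ into its neighbouring $X$-letters, and write $w=u_0c_1u_1\cdots c_ku_k$ with each $u_i$ a word over $(X\cup B)^{\pm1}$ (nonempty for $0<i<k$) and each $c_i\in C\setminus B$. Now iterate the reduction furnished by the normal form theorem (Britton's Lemma) for amalgamated free products \cite{LS80}. If $k=0$ then $u_0=1$ in $A$, and the relative area of $w$ is at most $\delta_A(\|u_0\|)\le\kappa n$ --- this uses only $\mathcal R$-cells and $C$-cells, because a relative van Kampen diagram over $\langle X\cup B\mid\mathcal R\rangle$ is also one over $\langle X\cup C\mid\mathcal R\rangle$ with the same number of $\mathcal R$-cells. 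If $k\ge1$, then the alternating word representing $w$ in $A\ast_B C$ is nontrivial with at least two syllables, hence not reduced; since each $c_i$ lies in $C\setminus B$, some $A$-syllable $u_i$ must represent an element $b_i\in B$. Rewrite that $u_i$ to a word of length at most $1$ representing $b_i$, at cost at most $\delta_A(\|u_i\|+1)\le\kappa(\|u_i\|+1)$ (again only $\mathcal R$-cells and $C$-cells), then coalesce $c_ib_ic_{i+1}$ --- or $b_0c_1$ if $i=0$, or $c_kb_k$ if $i=k$ --- into a single $C$-letter using one free $C$-cell, re-absorbing it into a neighbour if it falls in $B$. One checks that each such step strictly decreases the relative length, in fact by at least $\|u_i\|$. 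Iterating, after at most $n$ steps $w$ is reduced to a single syllable; this cannot be a lone $C\setminus B$-letter (those are nontrivial), so it is an $A$-syllable $u$ with $u=1$ in $A$, disposed of by a final $\delta_A(\|u\|)\le\kappa n$.

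Adding up the costs gives a relative area at most $\kappa\big(\sum_i\|u_i\|+(\text{number of steps})+n\big)$, where the sums range over the reduction steps. The number of steps is at most $n$, and $\sum_i\|u_i\|\le n$ because the relative length drops by at least $\|u_i\|$ at step $i$; hence the relative area is at most $3\kappa n$. Thus $\langle X\cup C\mid\mathcal R\rangle$ has a linear relative isoperimetric function, which is precisely the assertion that $A\ast_B C$ is hyperbolic relative to $C$. The only delicate point --- and the main obstacle --- is this final accounting: a careless bound is quadratic, since there can be $\Theta(n)$ reduction steps each of a priori cost $\Theta(n)$; the resolution is that the cost of a step is governed by the length of the syllable it eliminates, and these lengths telescope against the strictly decreasing relative length. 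One should also verify the routine facts used above: that coalescing $C$-syllables and reparsing $B$-letters never lengthens the word, so that relative length is a legitimate well-founded complexity measure, and that cells over $B$ imported from the presentation of $A$ indeed count as free $C$-cells.
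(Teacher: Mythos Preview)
Your proposal is correct and follows precisely the route the paper points to but does not itself carry out: the paper gives no proof of this lemma, instead citing \cite{D03} and \cite{Osi06c} and remarking that the relative-isoperimetric-function definition of \cite{Osi06a} renders the result ``essentially trivial.'' Your argument is a careful unpacking of that remark---observing that the same finite relative presentation $\langle X\cup C\mid\mathcal R\rangle$ works for $A\ast_B C$, and then using the normal form theorem for amalgamated free products to obtain a linear relative Dehn function, with the telescoping bookkeeping to keep the bound linear rather than quadratic. One cosmetic point: what you invoke is the normal form theorem for amalgamated free products (as in \cite{LS80}); ``Britton's Lemma'' is the standard name for the analogous statement for HNN extensions, so you may wish to drop that parenthetical.
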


We are now ready to prove the main result of this section.

\begin{proof}[Proof of Proposition \ref{Prop:reduction}]
%AK Eto vse skazano uzhe v usloviyah utverzhdeniya
%Without loss of generality, we can assume that $G\ne \{ 1\}$. Let
%$\mathcal A$ be a system of equations over $G$ of the form (\ref{Eq:sys}).
%We fix a basis $x_1, \ldots, x_n$ in $F_n$, a generator $t$ of $\ZZ$, and
%an arbitrary element $a\in G\setminus\{ 1\}$.
%%
For every $k=1, \ldots, m$,
let $u_k\in G\ast \ZZ$ be the image of $w_k$ under the map
$\iota\colon G\ast F_n\to G\ast \ZZ$ provided by Lemma \ref{Lem:GFn}.
Thus, $u_k$ is obtained from $w_k$ by substituting
$at^{20i+1}at^{20i+2}\cdots at^{20i+20}$ for $x_i$ in $w_k$ for all
$i=1, \ldots, n$. Thus, $\mathcal B$ consists of the equations $u_k=1$,
$k=1, \ldots, m$, with unknown $t$.

Suppose that $\mathcal A$ has a solution in a group $K$ containing $G$.
This means that the identity map $G\to G$ extends to an embedding of the
quotient group $R=(G\ast F_n)/N$ in $K$, where
$N=\ll w_1, \ldots, w_m\rr^{G\ast F_n}$. In what follows, we think of $R$
as a subgroup of $K$. Note that $N\cap G=\{ 1\}$; in particular,
$a\notin N$. By Lemma \ref{Lem:GFn}, the group $R$ naturally embeds in the
quotient group
$$
P=(G\ast\ZZ)/\ll \iota(N)\rr^{G\ast \ZZ}=
(G\ast\ZZ)/\ll u_1, \ldots, u_m\rr ^{G\ast \ZZ}
$$
and $P$ is hyperbolic relative to (the isomorphic image of) $R$. By Lemma
\ref{Lem:CT}, the amalgamated free product $L=P\ast_R K$ is hyperbolic
relative to $K$. Clearly, the image of $t$ in $L$ provides a solution for
$\mathcal B$.
\end{proof}

\section{Embedding amalgams in periodic groups}\label{Sec:Period}

Our next goal is to prove a general result -- Corollary \ref{Cor:T} below
-- which 
is used in the proof of the existence of univariate separating systems in part (a) of Theorem 1.2 and also
implies the solution to the B. Neumann problem mentioned in the
introduction. Although several similar results are known (see, for
example, \cite{Cou,P01}), Corollary \ref{Cor:T} does not seem to have been
recorded in the literature.

\begin{lem}\label{Lem:Q}
Let $G$ be a countable group hyperbolic relative to a collection of
subgroups $\Hl$. For any $g\in G$, there exists an epimorphism $\e\colon
G\to Q$ such that the restriction of $\e$ to
$\bigcup_{\lambda\in \Lambda} H_\lambda$ is injective, $Q$ is hyperbolic
relative to $\{ \e(H_\lambda)\}_{\lambda\in \Lambda}$, and $\e(g)$ either
has finite order or is conjugate to an element of $\e(H_\lambda)$ for some
$\lambda\in \Lambda $ in $Q$.
\end{lem}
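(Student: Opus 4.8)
The plan is to invoke the Dehn filling machinery for relatively hyperbolic groups. Given the countable relatively hyperbolic group $G$ with peripheral collection $\Hl$ and the fixed element $g\in G$, I would distinguish two cases. If $g$ is already conjugate into some $H_\lambda$ (in particular if $g$ has finite order), we may simply take $\e=\mathrm{id}$ and $Q=G$, so there is nothing to prove. Otherwise, $g$ is a loxodromic (hyperbolic) element with respect to the peripheral structure, and the idea is to quotient $G$ by the normal closure of a suitable power of $g$, thereby making the image of $g$ have finite order, while preserving enough structure.

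More precisely, I would argue as follows. Since $g$ is not conjugate into any $H_\lambda$, the element $g$ is loxodromic, so by the standard theory (for instance, the construction of hyperbolically embedded subgroups from loxodromic elements, or directly the results of Osin on elementary subgroups) there is a maximal elementary subgroup $E(g)\le G$ containing $g$, and $\{E(g)\}\cup\Hl$ is again a peripheral structure on $G$: that is, $G$ is hyperbolic relative to $\Hl\cup\{E(g)\}$. Now apply the relatively hyperbolic Dehn filling theorem (the version for countable groups, e.g.\ Osin's theorem on peripheral fillings, or Dahmani--Guirardel--Osin): there is a finite-index normal subgroup $N\lhd E(g)$, which we may take inside $\langle g^k\rangle$ for a sufficiently divisible $k$, such that the quotient $Q=G/\ll N\rr^G$ has the following properties — the peripheral subgroups $H_\lambda$ embed isomorphically (so $\e$ restricted to each $H_\lambda$, and hence to their union, is injective), $Q$ is hyperbolic relative to $\{\e(H_\lambda)\}_{\lambda\in\Lambda}$ together with $\e(E(g))\cong E(g)/N$, and the latter is finite. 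Since $g\in E(g)$ and $N$ has finite index in $E(g)$, the image $\e(g)$ lies in the finite group $\e(E(g))$ and hence has finite order, as required. Actually, one can arrange $\e(E(g))$ to be finite by choosing $N$ of finite index; and a finite peripheral subgroup can be absorbed, so $Q$ is hyperbolic relative to $\{\e(H_\lambda)\}_{\lambda}$ alone.

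The points that require care are: (i) verifying that $\{E(g)\}\cup\Hl$ is a valid peripheral structure — this is where one needs $g$ loxodromic and the existence of the maximal elementary subgroup, citing \cite{Osi06a} or \cite{DGO17}; (ii) ensuring the Dehn filling theorem applies to possibly uncountably many peripheral subgroups or in the countable setting — the hypothesis that $G$ is countable is exactly what makes the filling theorems available in the form we need, and one fills only the single new parabolic $E(g)$ while leaving the $H_\lambda$ untouched, so their injectivity is automatic from the filling theorem's ``$N=\{1\}$ on unfilled factors'' clause; (iii) checking that injectivity on $\bigcup_\lambda H_\lambda$ (not just on each $H_\lambda$ separately) follows — this holds because the filling theorem's kernel $\ll N\rr^G$ intersects each $H_\lambda$ trivially and, more strongly, the images of distinct parabolics remain ``independent,'' so an element of $H_\lambda$ and an element of $H_\mu$ cannot collapse together unless they were already equal. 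I expect step (i), arranging the correct enlarged peripheral structure and citing the precise filling statement that keeps the old peripherals intact, to be the main technical hurdle; the rest is a routine application of now-standard relatively hyperbolic Dehn filling.

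Alternatively, and perhaps more cleanly, one can avoid enlarging the peripheral structure by appealing directly to a theorem stating that any loxodromic element in a countable relatively hyperbolic group has a power that generates a virtually free hyperbolically embedded subgroup $\langle g\rangle \hookrightarrow_h G$, and then applying the Dehn filling theorem for hyperbolically embedded subgroups \cite{DGO17} to the single subgroup $\langle g^k\rangle$: this kills $g^k$ (making $\e(g)$ finite order), keeps each $H_\lambda$ embedded, and keeps $Q$ hyperbolic relative to $\{\e(H_\lambda)\}$. Either route gives the lemma; I would write up whichever citation is most directly quotable from \cite{Osi06a,Osi07,DGO17}.
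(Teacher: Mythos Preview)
Your proposal is correct and follows essentially the same route as the paper: handle the trivial cases, then for loxodromic $g$ adjoin the maximal virtually cyclic subgroup $E(g)$ to the peripheral structure (via \cite[Corollary~1.7]{Osi06a}), apply Dehn filling (the paper combines \cite{Osi07} with \cite[Theorem~7.15(c)]{DGO17}) to kill a suitable $\langle g^n\rangle\lhd E(g)$, and finally drop the resulting finite peripheral $E(g)/N$ using \cite[Theorem~2.40]{Osi06a}. Two small remarks: your parenthetical ``in particular if $g$ has finite order'' is false---finite-order elements need not be parabolic---but that case is still trivial since the conclusion permits $\e(g)$ to have finite order; and for your concern~(iii), the paper obtains injectivity on the whole union $\bigcup_\lambda H_\lambda$ in one stroke because the cited \cite[Theorem~7.15]{DGO17} gives injectivity on the entire relative generating set $X$, which is chosen to contain $\bigcup_\lambda H_\lambda$.
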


\begin{proof}
If $g$ has finite order in $G$ or is conjugate to an element of
$H_\lambda$ for some $\lambda\in \Lambda $, we let $Q=G$. Henceforth, we
can assume that the order of $g$ is infinite. By
\cite[Corollary~1.7]{Osi06a}, $g$ is contained in a virtually cyclic
subgroup $E(g)\le G$ such that $G$ is hyperbolic relative to the
collection of subgroups $\Hl\cup\{E(g)\}$. In particular, in the
terminology of \cite{DGO17}, the subgroup $E(g)$ is hyperbolically
embedded in $G$ with respect to a generating set $X$ that contains the
union $\mathcal H=\bigcup_{\lambda\in \Lambda} H_\lambda$ (see
\cite[Remark 4.26]{DGO17}). Combining the main result of \cite{Osi07} and
part (c) of the more general \cite[Theorem 7.15]{DGO17} yields a finite
subset $\mathcal F\subseteq E(g)\setminus \{1\}$ such that, for any
$N\lhd E(g)$ satisfying $N\cap \mathcal F=\emptyset$, the following hold.
\begin{enumerate}
\item[(a)]
The quotient group $E(g)/N$ and all subgroups $H_\lambda$ naturally embed
in $G/\ll N\rr^G$, and $G/\ll N\rr^G$ is hyperbolic relative to
$\Hl\cup\{E(g)/N\}$.
\item[(b)]
The natural homomorphism $G\to G/\ll N\rr$ is injective on $X$ (in
particular, on $\mathcal H$).
\end{enumerate}

Since $E(g)$ is virtually cyclic, there exists $n\in\NN$ such that the
subgroup $N=\langle g^n\rangle $ is normal in $E(g)$ and avoids
$\mathcal F$. By (a), the group $Q=G/\ll g^n\rr$ is hyperbolic relative to
$\Hl\cup\{E(g)/N\}$. Note that $E(g)/N$ is finite and, therefore, can be
excluded from the collection of peripheral subgroups by \cite[Theorem
2.40]{Osi06a}. Thus, $Q$ is hyperbolic relative to $\Hl$. Obviously, the
image of $g$ has finite order in $Q$.
\end{proof}

\begin{cor}\label{Cor:T}
Suppose that a countable group $G$ is hyperbolic relative to a collection
of periodic subgroups $\Hl$. Then there exists a periodic quotient group
$T$ of $G$ such that the restriction of the natural homomorphism $G\to T$
to $\bigcup_{\lambda\in \Lambda} H_\lambda$ is injective.
\end{cor}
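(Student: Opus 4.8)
The plan is to obtain $T$ as a direct limit of a sequence of relatively hyperbolic quotients of $G$, at each step killing one more element of (a countable enumeration of) the group while preserving the embedding of the peripheral subgroups. First I would fix an enumeration $G=\{g_0, g_1, g_2, \ldots\}$ and set $G_0=G$. Given a countable relatively hyperbolic group $G_k$ (hyperbolic relative to periodic subgroups, which are the images of the original $H_\lambda$) together with an epimorphism $\pi_k\colon G\to G_k$ that is injective on $\bigcup_\lambda H_\lambda$, apply Lemma \ref{Lem:Q} to the element $\pi_k(g_k)\in G_k$. This produces an epimorphism $\e_k\colon G_k\to G_{k+1}$ such that $G_{k+1}$ is hyperbolic relative to the images of the peripheral subgroups, the restriction of $\e_k$ to $\bigcup_\lambda \pi_k(H_\lambda)$ is injective, and $\e_k(\pi_k(g_k))$ either has finite order or is conjugate into one of the (periodic) peripheral subgroups of $G_{k+1}$. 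In either case the image of $g_k$ in $G_{k+1}$ has finite order. Composing, set $\pi_{k+1}=\e_k\circ\pi_k$; this is injective on $\bigcup_\lambda H_\lambda$ by induction, since $\pi_k$ is injective there and $\e_k$ is injective on $\bigcup_\lambda \pi_k(H_\lambda)$.

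Next I would let $T=\varinjlim G_k$ be the direct limit of the tower $G_0\to G_1\to G_2\to\cdots$, with natural epimorphism $\pi\colon G\to T$. Every element of $T$ is the image of some $g_k$ under $\pi$ (since the $\e_k$ are surjective), and by construction the image of $g_k$ already has finite order in $G_{k+1}$, hence its image in $T$ has finite order; therefore $T$ is periodic. Finally, the restriction of $\pi$ to $\bigcup_\lambda H_\lambda$ is injective: if $h, h'\in\bigcup_\lambda H_\lambda$ have the same image in $T$, then their images agree in some $G_k$, but $\pi_k$ is injective on $\bigcup_\lambda H_\lambda$, so $h=h'$. This gives the desired periodic quotient $T$ with $\pi$ injective on the union of the peripheral subgroups.

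The main obstacle I anticipate is bookkeeping to ensure the hypotheses of Lemma \ref{Lem:Q} persist along the tower: each $G_k$ must remain \emph{countable} (clear, as a quotient of the countable group $G$), must remain hyperbolic relative to a collection of \emph{periodic} subgroups (guaranteed since the new peripheral subgroups are homomorphic images of the periodic $H_\lambda$, hence periodic), and one must be careful that "having finite order in $G_{k+1}$'' is not destroyed later — but this is automatic, since the image in $T$ of a finite-order element is still of finite order. One subtlety worth addressing explicitly: when $\e_k(\pi_k(g_k))$ is conjugate into a peripheral subgroup rather than being directly of finite order, it is of finite order precisely because that peripheral subgroup is periodic, so the periodicity hypothesis on the $H_\lambda$ is exactly what makes the argument close up. No diagonalization or more delicate limiting argument seems necessary beyond the standard observation that a direct limit of an $\omega$-chain of epimorphisms in which every fixed element is eventually killed (or rendered torsion) is periodic.
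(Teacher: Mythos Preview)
Your proposal is correct and follows essentially the same argument as the paper: enumerate $G$, iteratively apply Lemma~\ref{Lem:Q} to force the image of each $g_k$ to have finite order while keeping the peripheral subgroups embedded, and pass to the limit. The only cosmetic differences are that the paper writes the limit as $T=G/\bigcup_i\ker\varkappa_i$ rather than as a direct limit, and re-indexes slightly (starting the enumeration with $g_0=1$ so that the finite-order condition holds already in $G_i$ rather than $G_{i+1}$); you also make explicit, as the paper does, that periodicity of the $H_\lambda$ is exactly what handles the ``conjugate into a peripheral'' case.
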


\begin{proof}
We fix an arbitrary enumeration $G=\{g_0=1, g_1, g_2, ...\}$ of elements
of $G$. By induction, we will construct a sequence of groups and
epimorphisms
$$
G\stackrel{\gamma_0}\longrightarrow 
G_0\stackrel{\gamma_1}\longrightarrow G_1
\stackrel{\gamma_2}\longrightarrow\ldots
$$
satisfying the following 
conditions 
for all $i\in \NN \cup\{ 0\}$.

\begin{enumerate}
\item[(a)] The composition 
$\varkappa_i=\gamma_0\circ \cdots \circ \gamma_{i}$ is injective on the set 
$\bigcup_{\lambda\in \Lambda} H_\lambda$. In what follows, 
we identify each $H_\lambda$ with its image in $G_i$.
\item[(b)] $G_i$ is hyperbolic relative to $\Hl$.
\item[(c)] The element $\varkappa(g_i)$ has finite order in $G_i$.
\end{enumerate}

Let $G_0=G$ and let $\gamma_0$ be the identity map. Clearly, 
(a), (b), and (c) 
are
satisfied for $i=0$.  Further, suppose that we have already constructed
groups $G_0, \ldots, G_i$ and the corresponding epimorphisms satisfying
the inductive assumption. At step $i+1$, the required group $G_{i+1}$ and
homomorphism $\gamma_{i+1}\colon G_i\to G_{i+1}$ exist by Lemma
\ref{Lem:Q} applied to the element $\varkappa_i(g_{i+1})\in G_i$. Note
that the assumption that all groups $H_\lambda$ are periodic is used here to ensure that 
if the image of $g_{i+1}$ is conjugate to an element of some $H_\lambda$, 
then it has finite order. 

Let
$$
T=G/\bigcup_{i\in \NN} \ker \varkappa _i.
$$
By (a), the natural homomorphism $G\to T$ is injective on $\bigcup_{\lambda\in \Lambda} H_\lambda$ while (c) ensures that $T$ is a periodic group.
\end{proof}

Let $G$ be a group splitting as $G=A\ast_C B$, where $C$ is a finite group. It is well-known that $G$ is hyperbolic relative to $\{ A, B\}$. This result is commonly attributed to Bowditch since the action of $G$ on the associated Bass-Serre
tree satisfies his definition of relative hyperbolicity as stated in
\cite[Definition~3.4]{Hru}. (The original version of the definition given in \cite{Bow} assumed finite generation of the peripheral
subgroups, which is unnecessary). Note that all known definitions of relative hyperbolicity are equivalent for countable groups \cite{Hru}. Therefore, every countable group $G=A\ast_C B$ is hyperbolic relative to $\{ A, B\}$ in the sense of the definition given in \cite{Osi06a}, which is accepted in this paper. In fact, countability is irrelevant here; indeed, relative hyperbolicity of $G$ can be derived directly from the definition given in \cite{Osi06a}. For the reader familiar with the terminology introduced therein, we provide a brief argument.

\begin{lem}
The free product $A*_CB$ of any groups $A$ and $B$ with finite
amalgamated subgroup $C$ is hyperbolic relative to the pair
of subgroups $\{A,B\}$.
\end{lem}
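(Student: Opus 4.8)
The plan is to verify directly that $G = A \ast_C B$ satisfies the definition of relative hyperbolicity from \cite{Osi06a}, using the linear relative isoperimetric inequality. First I would fix a finite generating set: since $C$ is finite, choose $C$ itself as part of the relative generating set, so that $G$ is generated by $A \cup B$ together with the (finite) set $C$. The relative presentation of $G$ with respect to the peripheral collection $\{A, B\}$ then has relators of bounded length — essentially the relations identifying the two copies of each $c \in C$ inside $A$ and inside $B$ — and in fact, since $C$ is finite, there are only finitely many such relators. So $G$ is finitely presented relative to $\{A, B\}$, which is the first requirement.

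The heart of the matter is the linear relative isoperimetric function. Here I would use the Bass–Serre tree $\mathcal{T}$ on which $G = A \ast_C B$ acts, with vertex stabilizers conjugates of $A$ and $B$ and edge stabilizers conjugates of the finite group $C$. Given a word $w$ in the relative generators representing the identity, one reads off a closed path in $\mathcal{T}$ starting and ending at the base vertex; the normal form theory for amalgamated products lets one track how far the path travels into the tree. The key quantitative point is that every ``excursion'' into the tree must be matched by a backtrack, and resolving one level of backtracking costs a number of relators proportional to the length of the peripheral syllable being cancelled — but that syllable length is absorbed into the ``relative'' area since peripheral letters are free of charge; only the finitely many short relators and the finitely many edge-group letters (elements of $C$) are counted. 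Thus the relative area of $w$ is bounded linearly in its length, giving the relative Dehn function $\preceq$ linear. One also needs the properness condition: any conjugate of $A$ intersects any conjugate of $B$ (or a distinct conjugate of $A$) in a subgroup of the finite group $C$, hence in a finite group, which is exactly the requirement that the peripheral subgroups form a malnormal-type family up to finite intersections as demanded in \cite{Osi06a}.

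The main obstacle will be packaging the tree argument as a genuine bound on the relative area rather than an informal appeal to normal forms: one has to be careful that the Van Kampen–style diagram over the relative presentation, once the peripheral cells are contracted, has a number of ordinary cells linear in $|w|$. I would handle this by induction on the number of syllables of $w$ in its amalgam normal form: a word of syllable length $\ell$ representing $1$ must contain an adjacent pair of syllables from the same factor (a ``pinch''), and rewriting that pinch — replacing two consecutive $A$-syllables by their product, or absorbing a $C$-syllable across an edge — uses $O(1)$ ordinary relators while reducing the syllable count, so after at most $|w|$ such moves the word is trivial and the total ordinary cost is $O(|w|)$.

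Finally I would invoke Bowditch's characterization for the reader who prefers it: $G$ acts properly, cocompactly on the tree $\mathcal{T}$ minus nothing — or rather, $\mathcal{T}$ is a fine hyperbolic graph (a tree, so $0$-hyperbolic, and fine because edges lie in only finitely many circuits, trivially) on which $G$ acts with finite edge stabilizers and finitely many orbits of edges, and the vertex stabilizers are precisely (conjugates of) $A$ and $B$; this is exactly \cite[Definition~3.4]{Hru}, and by the equivalence of definitions recorded in \cite{Hru} it matches the definition of \cite{Osi06a}. Either route establishes that $A \ast_C B$ is hyperbolic relative to $\{A, B\}$, completing the proof.
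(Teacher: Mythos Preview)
Your proposal is correct, and your core argument---showing the relative Dehn function is linear by inducting on the syllable length and using the normal form theorem to locate a $C$-syllable that can be absorbed into an adjacent factor at the cost of one relator conjugate---is exactly the paper's proof. (Your additional ``properness condition'' is not a hypothesis in the definition of \cite{Osi06a} but a consequence, so that paragraph is unnecessary; your Bowditch/Bass--Serre alternative is acknowledged by the paper in the discussion immediately preceding the lemma.)
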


\begin{proof}
Let $c\mapsto\~c$ and $c\mapsto\^c$ denote 
embeddings of $C$ into $A$ and $B$, respectively. Consider the natural homomorphism 
$\e\colon A\ast B\to A*_CB$. Clearly, $\ker \e$ coincides with the normal closure of the finite set $\{(\^c)^{-1}\~c\;|\;c\in C\}$ in ${A\ast B}$.
Let $w=d_1d_2\dots d_n\in \ker\e$ be a nonidentity element, where every $d_i$ belongs to $A\sqcup B$
and $n$ is minimal possible for the given $w$
(i.e., $d_i\ne 1$ and
$d_i$ lying in $A$ alternate with $d_i$ lying in~$B$). 
By the normal form
theorem for amalgamated free products
(see, e.g., \cite{LS80}), we have $d_i=\~c$
or $d_i=\^c$ for some~$i$ and some $c\in C$. In the former case, we obtain  the equality
$$
w=
\underbrace{
d_1\dots d_{i-2}
\overbrace{(d_{i-1}\^cd_{i+1})}^{d_i'}
d_{i+2}\dots d_n
}_{w'}
\cdot\bigl((\^c)^{-1}\~c\bigr)^{d_{i+1}\dots d_n}
$$
in $A*B$, where the length
of the word $w'$ in the alphabet $A\sqcup B$ is less than the length of $w$ (which is $n$), and similarly in the latter case. 
The obvious induction shows that
$w$ can be represented as a product of at most $n$,
conjugates of elements from the set $\{(\^c)^{-1}\~c\;|\;c\in C\}$. This 
means that the relative Dehn function of $G$ with respect to $\{ A, B\}$ is linear and the claim of the lemma follows.
\end{proof}

\begin{proof}[Proof of Theorem \ref{Thm:AmEmb}]
By the previous lemma, $A\ast_C B$ is hyperbolic relative
to $\{ A, B\}$. By Corollary~\ref{Cor:T}, there exists a torsion quotient
group $Q$ of $A\ast_CB$ such that the natural homomorphism $A\ast_CB\to Q$
is injective on $A\cup B$.
\end{proof}

\section{Applications}\label{Sec:App}

In this section, we discuss applications of the results obtained above to
some particular classes of groups. We will write  
\begin{tikzcd} 
\K \arrow[r, "e" description, hook] & \L 
\end{tikzcd} 
to mean that a 
subclass $\K$ is 
equationally
separable in a class of groups $\L$. If, in addition, the 
separating system can be maid univariate (respectively, can be chosen to
consist of a single equation) for every $G\in \K$, we add $u$ (respectively, $1$) to the arrow
label. These additional labels can be added individually or simultaneously; for example,
the label $ue1$ means that the separating system can be chosen to consist
of a single univariate equation for every $G\in \K$. 
Finally, the addition of 
$\not\hspace{-.5mm} 1$ to the arrow label $e$ or $ue$ between $\K$ and $\L$ means that
there exists a non-trivial group $G\in \K$ such that the corresponding
separating system cannot consist of a single equation.

\begin{thm}\label{Thm:main}
The following relations between classes of groups hold.
\begin{center}
%AK Chtoby ne razryvalas' stranica
\vbox{
\hspace*{12mm}\begin{tikzcd}
\left\{\begin{array}{c} \text{\small all}\\
\text{\small groups}\end{array}\right\}
\end{tikzcd}
\vspace{-4mm}

\begin{tikzcd}[row sep=large]
&[-5mm] {}&[-21mm]{\hspace{30mm}}&[-21mm]{}&\\
\left\{\begin{array}{c} \text{\small torsion-free}\\ \text{\small groups}\end{array}\right\} \arrow[rru, "ue\not{\, 1}" description, "(1)", hook, bend left=5, shift left=1] &[-5mm]
\left\{\begin{array}{c}  \text{\small free-group-free} \\ \text{\small groups} \end{array}\right\} \arrow[ru, "ue" description, "(2)",hook] &[-21mm]&[-21mm]
\left\{\begin{array}{c}  \text{\small periodic}\\ \text{\small groups}\end{array}\right\} \arrow[lu, "ue1" description, "(3)"', hook'] &
\left\{\begin{array}{c}  \text{\small groups with}\\ \text{\small decidable WP}\end{array}\right\} \arrow[llu, "ue" description, "(4)"', hook', bend right=5, shift right=1]
\\
\left\{\begin{array}{c}  \text{\small left-orderable}\\ \text{\small groups}\end{array}\right\} \arrow[u, "e\not{\, 1}" description, "\;\;(5)"', hook] &[-5mm]
\left\{\begin{array}{c}  \text{\small amenable}\\ \text{\small groups}\end{array}\right\} \arrow[u, "e" description, "\; (6)"',  hook]  &[-21mm]&[-21mm]
\left\{\begin{array}{c}  \text{\small finite}\\ \text{\small groups}\end{array}\right\} \arrow[u, "ue" description, "\; (7)"', hook] \arrow[r, "ue1" description, "(10)", hook] \arrow[ll, "e" description, "\; (9)"', hook'] &
\left\{\begin{array}{c}  \text{\small groups with} \\ \text{\small decidable CP} \end{array}\right\} \arrow[u, "ue" description, "\; (8)"', hook]
\\
\left\{\begin{array}{c}  \text{\small locally indicable}\\ \text{\small groups}\end{array}\right\}  \arrow[u, "e\not{\, 1}" description, "\;\; (11)"', hook]    &[-5mm]
\left\{\begin{array}{c}  \text{\small solvable}\\ \text{\small groups}\end{array}\right\}  \arrow[u, "e" description, "\; (12)"', hook]  &[-21mm]&[-21mm] &
\\
\left\{\begin{array}{c}  \text{\small orderable}\\ \text{\small groups}\end{array}\right\}  \arrow[u, "ue1" description,"\;\; (13)"', hook] &[-5mm]
\left\{\begin{array}{c}  \text{\small nilpotent}\\ \text{\small groups}\end{array}\right\}  \arrow[u, "e" description, "\; (14)"', hook]  &[-21mm]&[-21mm] \left\{\begin{array}{c}  \text{\small abelian}\\ \text{\small groups}\end{array}\right\}  \arrow[ll
, "e1" description, "(15)"',hook'] &
\end{tikzcd}
%AK
}
\end{center}
Furthermore, for any set of primes $\rho$ and any proper subset $\pi\subset \rho$, the following refinement of (7) holds:
\begin{center}
\begin{tikzcd}
\left\{ \begin{array}{c} \text{\small $\pi $-groups}\end{array}\right\} \arrow[r, "e1" description, "(16)", hook]&
\left\{ \begin{array}{c} \text{\small $\rho$-groups}\end{array}\right\}  &
\left\{ \begin{array}{c} \text{\small finite}\\ \text{\small $\rho$-groups}\end{array}\right\}  \arrow[l, "e" description, "(17)"', hook'] &
\left\{ \begin{array}{c} \text{\small finite}\\ \text{\small $\pi$-groups}\end{array}\right\} \arrow[l, "e1" description, "(18)"', hook']
\end{tikzcd}
\end{center}
\end{thm}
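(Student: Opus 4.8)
The plan is to derive every arrow $\K\hookrightarrow\L$ of the diagram from Theorem~\ref{Thm:Simple}: one verifies that the ambient class $\L$ is simple — which is exactly Proposition~\ref{Prop:Simple} in all the relevant cases — and produces a quasi-identity that holds throughout $\K$ but fails in $\L$; Theorem~\ref{Thm:Simple} then hands back, for each nontrivial $G\in\K$, a finite system solvable in $\L$ but not in any overgroup of $G$ satisfying the quasi-identity, hence not in $\K$. The quasi-identity is in turn manufactured, via Lemma~\ref{Lem:qi}, from an \emph{obstruction group}: a finitely presented $Q$ with a nontrivial $q$ killed by every homomorphism from $Q$ into a member of $\K$, together with a group in $\L$ on which the resulting quasi-identity fails. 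For several targets this is immediate. Torsion elements die in torsion-free images, so $Q=\ZZ_2=\langle x\mid x^2\rangle$, $q=x$ handles $\K=$ torsion-free groups; a finitely presented infinite simple group containing $F_2$, e.g.\ Thompson's group $V$, handles $\K\in\{$free-group-free groups, finite groups$\}$ relative to $\L=$ all groups, since a nontrivial homomorphism out of a simple group is injective and would embed $F_2$ (resp.\ an infinite group) into the target; the Baumslag--Solitar group $Q=BS(2,3)$ with $q=[aba^{-1},b]$ — a nontrivial element dying in every finite quotient, while $BS(2,3)$ has solvable word and conjugacy problems — handles $\K=$ finite groups relative to $\L=$ groups with solvable conjugacy problem; Abels' finitely presented solvable group that is not residually finite, with $q$ a nontrivial element of its central $\ZZ[1/p]$ (which lies in every finite-index subgroup), handles $\K=$ finite groups relative to the simple classes of solvable, amenable and free-group-free groups; and for $\K=\pi$-groups inside $\L=\rho$-groups one uses the one-relator quasi-identity $x^{\,q}=1\Rightarrow x=1$ for a prime $q\in\rho\setminus\pi$, violated in $\ZZ_q$.

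The label $u$ (``univariate system suffices'') is obtained by applying Proposition~\ref{Prop:reduction} to the separating system found above: the substitution $x_i=at^{20i+1}\cdots at^{20i+20}$ produces a one-variable system solvable in a group hyperbolic relative to the original solution group $K$. For $\L\in\{$all groups, torsion-free groups, groups with solvable word problem$\}$ that relatively hyperbolic overgroup can be kept inside $\L$ (for the word-problem case via the Boone--Higman embedding of a group with solvable word problem into a finitely presented simple group), and nothing more is needed. For $\L=$ periodic groups the overgroup is not periodic, as it contains the infinite-order $t$; here, after the routine reduction to a finitely generated — hence countable — group of coefficients, one applies Corollary~\ref{Cor:T} to the countable group hyperbolic relative to the periodic $K$: it has a periodic quotient into which $K$, and so $G$ together with the coefficients of the one-variable system, embeds, producing the required periodic solution. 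Non-solvability over finite groups survives the reduction automatically, since a finite solution of the one-variable system substitutes back into a finite solution of the original multivariate one.

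The single-equation arrows (label $1$) are treated by hand, bypassing Theorem~\ref{Thm:Simple}: over an abelian $G$ the equation $[x,y]=g$ with $g\neq1$ is solvable in a class-$2$ nilpotent overgroup but in no abelian one; over an orderable $G$ the equation $xgx^{-1}g=1$ is solvable in a suitable locally indicable overgroup (an amalgam with a Klein-bottle-type group along $\langle g\rangle$) but in no bi-orderable one, an inversion being incompatible with a two-sided order; over a $\pi$-group a single power equation of the shape $x^{\,q}=g'$ ($q\in\rho\setminus\pi$) is solvable in a $\rho$-group but, forcing an element of $q$-power order, in no $\pi$-group; and similarly for the remaining $1$-labelled arrows, where one collapses the two-equation system coming from a one-premise quasi-identity or writes the separating condition as a single relator directly. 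The complementary labels $\not1$ (on the torsion-free, left-orderable and locally indicable arrows) are impossibility statements: taking $G=\ZZ$ as witness, every non-singular single equation over $\ZZ$ that is solvable at all is already solvable within the subclass by the Brodskii--Howie--Short theorem and Klyachko-type results on equations over locally indicable / left-orderable / torsion-free groups, while singular single equations over $\ZZ$ are trivially solvable, so no single equation can separate.

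The refinement (16)--(18) for $\pi\subset\rho$ repeats the main mechanism, with the quasi-identity $x^{\,q}=1\Rightarrow x=1$ and the simplicity of the classes of $\rho$- and $\pi$-groups from Proposition~\ref{Prop:Simple}. The genuinely hard part of the theorem — the only place where one leaves the realm of classical facts — is the construction of the obstruction groups for the periodic, amenable, and decidable-problem columns: one needs a finitely presented group possessing a nontrivial element that dies under every homomorphism to a finite (resp.\ amenable) group yet survives in an infinite periodic (resp.\ free-group-free, non-amenable) quotient, and, for arrows~$(4)$ and~$(8)$, finitely presented groups detecting, by a single element, the undecidability of the word (resp.\ conjugacy) problem in every quotient or image lying in the relevant subclass. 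These are produced by combining Olshanskii's theory of periodic quotients with Corollary~\ref{Cor:T} and Theorem~\ref{Thm:AmEmb} on the periodic side and Boone--Higman--Collins--Miller-type word-problem constructions on the other; once they are available, all arrows of Theorem~\ref{Thm:main} follow by assembling Theorem~\ref{Thm:Simple}, Proposition~\ref{Prop:reduction} and Corollary~\ref{Cor:T} as described, the remaining work being the bookkeeping of which ingredient serves which arrow.
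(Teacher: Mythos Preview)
Your overall architecture---Theorem~\ref{Thm:Simple} plus Lemma~\ref{Lem:qi} for the $e$ labels, Proposition~\ref{Prop:reduction} for the $u$ labels, Brodskii--Howie--Short for the $\not 1$ labels---matches the paper, and several of your obstruction groups (e.g.\ $\ZZ_2$ for (1), a non-residually-finite solvable group for (9), $[x,y]=g$ for (15), $g^x=g^{-1}$ for (13)) are the same or equivalent to the paper's. But there are genuine gaps in four places.

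\textbf{Claim (3).} Your plan cannot work here even in principle: every quasi-identity that holds in all periodic groups holds in all groups, because free groups are residually finite and hence lie in the quasivariety generated by finite groups. So Theorem~\ref{Thm:Simple} is simply inapplicable to the arrow periodic $\hookrightarrow$ all groups, and your ``similarly for the remaining $1$-labelled arrows, where one collapses the two-equation system coming from a one-premise quasi-identity'' has nothing to collapse. The paper instead builds an explicit single equation by nesting: it takes a non-exotic univariate equation $w(x)=1$ unsolvable over $G$ (Baumslag's or Lyndon's) and forms $\bigl(w(x)\bigr)^{(w(x))^y}=\bigl(w(x)\bigr)^2$, which by Baumslag's lemma forces $w(\tilde x)=1$ in any periodic overgroup, yet is solvable in an amalgam of $G*\langle c\rangle_\infty$ with the Baumslag--Gersten group. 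The same equation, together with Bumagin's theorem on conjugacy in relatively hyperbolic groups, handles the $ue1$ in Claim~(10); your $BS(2,3)$ idea gives at best a system, not a single equation, and would moreover require the class of groups with decidable CP to be simple, which is neither asserted nor used in the paper.

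\textbf{Claim (16).} Your proposed equation $x^{\,q}=g'$ does \emph{not} separate: in any $\pi$-group the map $g\mapsto g^{\,q}$ is a bijection (since $q\notin\pi$ is coprime to every element order), so $g'$ already has a $q$-th root inside $G$. The paper uses $(ax)^{q^k}=x^{q^k}$ instead; bijectivity of $g\mapsto g^{q^k}$ then forces $a=1$ in any $\pi$-overgroup, while a wreath-product embedding $G\hookrightarrow G\wr\langle\tilde x\rangle_{q^k}$ supplies a $\rho$-group solution.

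\textbf{Claim (17).} This is the hardest arrow and you do not actually address it. One needs a quasi-identity that holds in every \emph{finite} $\rho$-group but fails in some infinite periodic $\rho$-group; neither Olshanskii's embedding theorem nor Corollary~\ref{Cor:T} produces such a thing. The paper invokes Ivanov's theorem on quasivarieties of Burnside groups for $\rho\ne\{2\}$, and for $\rho=\{2\}$ combines the infinitude of free Burnside $2$-groups of large exponent with Zelmanov's restricted Burnside theorem to extract a finitely generated perfect $2$-group, yielding the quasi-identity directly. Without these ingredients Claims~(17) and hence~(7) are unproved.

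Finally, a minor slip: in your univariate paragraph you list ``torsion-free groups'' among the ambient classes $\L$ to which Proposition~\ref{Prop:reduction} applies, but torsion-free is never the target class of a $u$-labelled arrow; the relevant RH-closed classes are all groups, groups with decidable WP, groups with decidable CP, and (with the extra step through Corollary~\ref{Cor:T}) periodic groups.
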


The numbers in parentheses next to arrows in Theorem \ref{Thm:main} have
no mathematical meaning; they are added to designate separate claims and
serve for reference purposes below. Abbreviations CP and WP denote the
conjugacy and word problem respectively.

To prove Theorem \ref{Thm:main}, we will need a couple of known facts.

\begin{lem}[Baumslag, \cite{Ba69}]
The element $a$ of the group $\left\la a,b \mid a^{a^b}=a^2\right\ra$
is non-trivial and contained in the kernel of every group homomorphism
sending $a$ to a finite-order element.
\end{lem}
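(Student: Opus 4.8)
The plan is to analyze the group $B = \left\la a,b \mid a^{a^b}=a^2\right\ra$ via the natural HNN and free-product-with-amalgamation structure hidden in the relation, following Baumslag's original idea. The relation $a^{a^b} = a^2$, written out, says $(a^{-b})a(a^b) = a^2$, i.e. conjugating $a$ by $a^b = b^{-1}ab$ transforms $a$ into $a^2$. The key observation is that $a$ together with the element $c := a^b$ generate a Baumslag--Solitar-like subgroup: the subgroup $\la a, c\ra$ is a quotient of $BS(1,2) = \la a, c \mid c^{-1}ac = a^2\ra$, in which $a$ has infinite order. So the first step is to set up a decomposition of $B$ exhibiting $a$ as an element of infinite order. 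Concretely, one can view $B$ as an HNN extension or realize it inside an iterated construction: $\la a \ra \cong \mathbb{Z}$ sits inside $BS(1,2)$, and then $b$ is adjoined as a stable letter conjugating $a$ to $c = a^b$; managing the amalgamation carefully shows $a$ has infinite order in $B$, hence $a \neq 1$.

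Next I would prove the second assertion: in any homomorphism $\psi\colon B \to K$ with $\psi(a)$ of finite order, we must have $\psi(a)=1$. Suppose $\psi(a)$ has order $n$. The relation forces $\psi(a)^{a^b}$ and $\psi(a)$ to be conjugate, so $\psi(a)^2 = \psi(a)^{\psi(a)^{\psi(b)}}$ also has order $n$; but $\psi(a)^2$ has order $n/\gcd(n,2)$. If $n$ is even this already gives a contradiction with $n$ being the order unless we iterate. The clean argument is: conjugation is an automorphism of $\la \psi(a)\ra$ when $\la \psi(a)\ra$ is characteristic in its normal closure — but more robustly, from $\psi(a)^2 = x^{-1}\psi(a)x$ for $x = \psi(a^b)$, we get by induction $\psi(a)^{2^k} = x^{-k}\psi(a)x^k$, so $\psi(a)^{2^k}$ is conjugate to $\psi(a)$ and therefore has order exactly $n$ for every $k \geq 0$. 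Choosing $k$ large enough that $2^k \equiv 0 \pmod{n}$ when — wait, that is impossible unless $n$ is a power of $2$; the right move is to choose $k$ with $2^k \equiv 1 \pmod n$ is also not always possible. Instead: $|\psi(a)^{2^k}| = n/\gcd(n,2^k)$, and this equals $n$ forces $\gcd(n,2^k)=1$ for all $k$, i.e. $n$ is odd. Then pick $k$ with $2^k \equiv 0$? Still stuck — so one instead runs the relation the other way: $\psi(a) = x\psi(a)^2 x^{-1}$, giving $\psi(a)^{2^{-k}}$-type elements and forcing $n \mid 2^k - $ adjustments; the correct conclusion, which is Baumslag's, is that iterating yields $\psi(a)$ conjugate to $\psi(a)^{2^k}$ for all $k\in\mathbb Z$ in a suitable sense, and finite order then forces $\psi(a)=1$ because $\la 2 \ra$ is dense enough in $(\mathbb Z/n)^\times$ — more precisely, $\psi(a)$ and $\psi(a)^2$ generate the same cyclic group, so $2$ is a unit mod $n$, and $\psi(a)=\psi(a)^{2}{}^{\text{conjugate}}$ combined with commutation relations collapses the group; the cleanest route is to show directly that $\ll a\rr^B$-image is abelian-by-finite and extract a contradiction.

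Given the delicacy above, I expect the \textbf{main obstacle} to be packaging the "finite order forces triviality" implication rigorously rather than heuristically — the naive order-counting needs to be replaced either by (i) a clean statement that $\psi(a)$ and $\psi(a)^2$ are conjugate in $K$, whence they generate isomorphic, in fact equal, cyclic subgroups, so $2$ is invertible mod $n=|\psi(a)|$, and then a second application of the relation (now solvable for the "square root" direction) pins down $\psi(a)$; or (ii) simply citing Baumslag \cite{Ba69} for this exact statement, which the paper is entitled to do since the lemma is attributed to him. For a self-contained write-up I would take route (i): from $\psi(a)^2 = \psi(a)^{\,\psi(a^b)}$ deduce $\langle \psi(a)\rangle = \langle \psi(a)^2\rangle$ in $K$ (as $\psi(a)^2$ is a conjugate, hence a generator of a conjugate cyclic group of the same order lying inside... ) — the honest fix is that conjugate elements need not generate the \emph{same} subgroup, so one must work inside $N = \ll \psi(a)\rr^K$ and use that the relation makes the map "square" an inner automorphism of $N$ restricted appropriately; iterating an inner automorphism of finite-exponent-abelianization still has infinite order unless trivial. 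I would therefore present the argument as: reduce to $K = B/\ll a^{n}\rr^B$, show this quotient is trivial on $a$ by observing the relation becomes $a^{a^b}=a^2$ with $a^n=1$, so $a = a^{2^k}$-conjugate for all $k$, and since $a^{n}=1$ the exponent $2$ acts invertibly, forcing (after the second, descending application) $a=1$; the remaining routine verification that this forces triviality I would leave to a short direct computation or a reference to \cite{Ba69}.
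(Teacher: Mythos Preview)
The paper does not prove this lemma at all; it simply states it with attribution to Baumslag \cite{Ba69} and uses it as a black box. So there is no ``paper's own proof'' to compare against, and your option (ii) --- just citing \cite{Ba69} --- is exactly what the authors do.

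That said, your self-contained attempt has a genuine gap in the second half. Your outline for non-triviality of $a$ is fine: $B$ is an HNN extension of $BS(1,2)=\la a,c\mid c^{-1}ac=a^2\ra$ with stable letter $b$ conjugating the infinite cyclic subgroup $\la a\ra$ to $\la c\ra$, so $a$ survives by Britton's lemma. But for the finite-order implication you repeatedly circle the target without landing. You correctly observe that $\psi(a)^2$ is conjugate to $\psi(a)$, hence $n=|\psi(a)|$ is odd. What you never use is the one extra piece of information that makes the argument close: the conjugating element $x=\psi(a^b)=\psi(b)^{-1}\psi(a)\psi(b)$ is itself a conjugate of $\psi(a)$, so it \emph{also} has order $n$. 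From $x^{-1}\psi(a)x=\psi(a)^2$ one gets $x^{-k}\psi(a)x^{k}=\psi(a)^{2^k}$ for all $k\ge 0$; taking $k=n$ and using $x^n=1$ yields $\psi(a)=\psi(a)^{2^n}$, i.e.\ $n\mid 2^n-1$. A one-line number-theoretic argument (take the smallest prime $p\mid n$; then the order of $2$ modulo $p$ divides $\gcd(n,p-1)=1$, forcing $p\mid 1$) then gives $n=1$.

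Without the observation that $|x|=n$, the statements ``$\psi(a)$ is conjugate to $\psi(a)^{2^k}$ for all $k$'' and ``$2$ is a unit mod $n$'' are both true but do not by themselves force $n=1$; your various attempted fixes (descending application, abelianization, density of $\la 2\ra$ in $(\ZZ/n)^\times$) do not supply the missing constraint. Insert the sentence ``$x$ has order $n$ since it is conjugate to $\psi(a)$'' and the rest collapses to the three lines above.
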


An equation $w(x,y,\dots)=1$ over a group $G$
is called \emph{non-exotic} if $w(x,y,\dots)$ is not conjugate to an
element of $G$ in $G*F(x,y,\dots)$.

\begin{thm}[Brodskii--Howie--Short \cite{B80,B84,How81,Sh81}]
Any non-exotic equation over a locally indicable group
is solvable in a 
locally indicable overgroup.
\end{thm}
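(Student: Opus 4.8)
The plan is to reduce the statement to its essential case and then invoke the asphericity/embedding theory for one-relator products of locally indicable groups due to Brodskii, Howie and Short. First I would cut the number of unknowns down to one. If $w\in G\ast F_n$ is not conjugate to an element of $G$ but, after relabelling the unknowns, is conjugate inside $G\ast F_n$ to some $w'\in G\ast F_{n-1}$, then (conjugation being invertible and $w'$ ignoring the last unknown) a tuple solves $w=1$ over $G$ precisely when its truncation solves $w'=1$, and $w'$ is again not conjugate into $G$; iterating, and observing that we can never reach an element of $G$ (such an element would be conjugate to $w$, which is excluded), we arrive at a relator in $G\ast F_k$ that is not conjugate into $G\ast F_{k-1}$. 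Viewing $G\ast F_{k-1}$ as an enlarged coefficient group — still locally indicable, since that class is closed under free products — leaves a single equation in one unknown $t$; rename this coefficient group $G$. Writing $w=v^m$ with $v$ not a proper power, $v$ is still not conjugate into $G$, and since locally indicable groups are torsion-free, any solution of $v=1$ in a locally indicable overgroup automatically solves $w=1$. So it suffices to prove: \emph{if $w\in G\ast\ZZ$ is not conjugate into $G$ and is not a proper power, then $G$ embeds into $\Gamma:=(G\ast\ZZ)/\ll w\rr^{G\ast \ZZ}$ and $\Gamma$ is locally indicable} — for then $\Gamma\supseteq G$, together with $t$, solves the equation.

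For this core statement I would run Howie's tower argument, whose conclusion is that the relative presentation $\langle G,t\mid w\rangle$ is aspherical rel $G$. Form the relative presentation complex $Y$ from a $K(G,1)$ by wedging a loop for $t$ and attaching one $2$-cell along the word $w$, so that $\pi_1Y=\Gamma$ and there is a natural map $X=K(G,1)\to Y$. Were $X\to Y$ not $\pi_1$-injective, or $Y$ not aspherical rel $X$, there would be a nontrivial spherical $G$-diagram over the relative presentation; pick one, $D$, of minimal complexity, and lift it through a maximal tower of coverings and subcomplex inclusions $D\to Z_1\to\dots\to Z_m\to Y$. Asphericity of the $K(G,1)$-part forces every interior $2$-cell of $Z_m$ to read a lift of $w$, and maximality makes the $1$-skeleton ``efficient''. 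Here local indicability enters: only finitely generated subgroups of $G$ appear in the finite diagram $D$, and each of these surjects onto $\ZZ$; I would use such an epimorphism to grade the (free) fundamental group of the relevant part of the top of the tower so that $w$ acquires a strict ``highest'' subword, along which a boundary $2$-cell of the lifted diagram can be matched only by a $2$-cell reading $w^{-1}$ and folds against it. This is the two-complex form of Magnus's staircase argument for one-relator groups, and it is exactly here that $w$ being no proper power is used, to preclude a periodic grading; the fold contradicts minimality of $D$. Hence $X\to Y$ is injective and $Y$ is aspherical; that $\Gamma$ itself is locally indicable is part of the same theorem of Howie.

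The main obstacle is the tower-top analysis: organising a maximal tower so that local indicability genuinely delivers the required $\ZZ$-grading, and controlling how boundary $2$-cells fold in the lifted diagram. A purely algebraic alternative (Brodskii, via Moldavanskii's adaptation of Magnus's method) sidesteps the topology: after arranging — by conjugating $w$, composing with a suitable homomorphism $G\to\ZZ$, or, when exponent sums do not cooperate, passing to an overgroup by a Magnus trick — that some generator has total exponent sum zero in $w$, one passes to $\ker(\Gamma\to\ZZ)$, which is a \emph{staggered} one-relator product of countably many conjugate copies of the ingredients, assembles it through a chain of amalgamated free products and HNN extensions, and inducts on the length of the rewritten relator, the base case being a relator equal to a single generator (which is simply deleted). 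The delicate point there is that neither local indicability nor $\pi_1$-injectivity survives arbitrary amalgams of locally indicable groups, so the staggered structure must be exploited to guarantee that the successive edge subgroups are embedded well enough — in effect, as retracts — for both properties to climb the chain.
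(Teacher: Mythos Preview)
The paper does not prove this theorem; it is quoted as a known result from \cite{B80,B84,How81,Sh81} and used as a black box (for Claim~(13) and for the ``inseparability by a single equation'' assertions in (1), (5), (11)). There is therefore no proof in the paper to compare your proposal against.

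Your outline is nonetheless a faithful high-level summary of the two classical routes --- Howie's tower/asphericity argument and Brodskii's Magnus-style staircase induction --- and you correctly locate where local indicability is invoked, why the non-proper-power hypothesis matters, and how the reduction to one unknown goes. It is a plan rather than a proof: the tower-top analysis (producing the $\ZZ$-grading from a surjection of a finitely generated coefficient subgroup onto $\ZZ$ and executing the fold) and, on the algebraic side, the verification that the edge subgroups in the staggered chain embed well enough for both injectivity and local indicability to propagate, each require substantial detail that you have only named. As a roadmap it is sound and matches the original sources.
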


We now proceed with the proof of Theorem \ref{Thm:main}, which will be
subdivided into four separate themes.

\bigskip

\noindent{\bf Separability by a single equation.}

\emph{Claim (3). }  We first note that, for any
group $G$ with torsion, there exists a non-exotic equation with
coefficients in $G$ unsolvable over this group. Indeed, let
$a\in G\setminus \{ 1\}$ be a finite-order element.
If $a^2\ne 1$, we can take
Baumslag's equation $a^{a^x}=a^2$. If $a^2=1$, we can take the equation
$a^{x^2}=[a,a^x]$, whose unsolvability over the group
$\la a\ra_2\cong \ZZ_2$ is proved by Lyndon in \cite{Ly80}.

Let $G$ be a periodic group. We consider the equation
\begin{equation}\label{*}
\Big(w(x)\Big)^{\left(w(x)\right)^y}=(w(x))^2,
\end{equation}
where $w(x)=1$ is a non-exotic equation with coefficients in $G$
unsolvable over $G$. If (\ref{*}) has a solution $(\tilde x,\tilde y)$ in
a periodic group containing $G$, then, by virtue of Baumslag's lemma, we
have $w(\~x)=1$. This contradicts unsolvability of the equation $w(x)=1$
over $G$. Thus, (\ref{*}) is unsolvable in the class of periodic groups.

Further, we will show that (\ref{*}) is solvable over $G$. Since the
equation $w(x)=1$ is non-exotic, the element $w(c)$ of the group
$G*\la c\ra_\infty$ is not conjugate to an element of $G$ and, therefore,
has infinite order. The element $a\in\left\la a,b \mid
a^{a^b}=a^2\right\ra$ also has infinite order by 
%AK
%the Baumslag
Baumslag's
lemma.
Hence, we can form the amalgamated free product \begin{equation}\label{FP}
\widetilde G=\big(G*\la c\ra _\infty\big)\ast_{w(c)=a}\left\la a,b \mid
a^{a^b}=a^2\right\ra. \end{equation} Clearly, $x=c$, $y=b$ is a solution
to (\ref{*}) in $\widetilde G$.

\medskip

\emph{Claim (10)}.
Let $G$ be a finite group. We will show that the same equation (\ref{*})
works in this case. It suffices to prove that the group $\widetilde G$
defined by (\ref{FP}) has decidable conjugacy problem.

Recall that the element $w(c)\in G*\la c\ra _\infty$ from the proof of
Claim (3) is not conjugate to an element of $G$. Furthermore, we can
assume that $w(c)$ is not a proper power. Therefore, the group
$G*\la c\ra_\infty$ is relatively hyperbolic with respect to $\la w(c)\ra$
by \cite[Corollary 1.7]{Osi06b}.  Applying Lemma \ref{Lem:CT}, we obtain
that the group $\widetilde G$ defined by (\ref{FP}) is hyperbolic relative
to $\left\la a,b\mid a^{a^b}=a^2\right\ra$.

The conjugacy problem in Baumslag's group $\left\la a,b\mid
a^{a^b}=a^2\right\ra$ is decidable \cite{Be12} (see also \cite{DMW16}). By
the main result of \cite{Bu04}, \emph{any finitely generated group
hyperbolic relative to subgroups with decidable conjugacy problem has
decidable conjugacy problem itself}. Therefore, $\widetilde G$ has
decidable  conjugacy problem and the claim follows.

\medskip

\emph{Claim (13).}
Orderable groups are locally indicable \cite{RR02}. The equation
$g^x=g^{-1}$, where $g\ne 1$ is an element of an orderable group $G$,
cannot have solutions in an orderable group (if $g>1$ in an orderable
group, then $g^x>1$ and $g^{-1}<1$), but has a solution in a locally
indicable group by the Brodskii--Howie--Short theorem.

\medskip

\emph{Claim (15)}.
Let $g$ be a nontrivial element of an abelian group $G$.
Then the equation $[x,y]=g$ cannot have solutions in abelian groups
containing $G$, but has an obvious solution
($x=a$, $y=b$)
in the central product~$(H\times G)/\la g^{-1}c\ra$,
where
$$
H=\la a,b,c|[a,b]=c,\;[a,c]=[b,c]=c^n=1\ra
$$
is the quotient of the Heisenberg
group $UT_3(\ZZ)$ by the subgroup $\la c^n\ra$ and~$n$ is the order of $g$
(or zero, if 
%AK
$|\gp g|=\infty$).

\medskip

\emph{Claim (16).}
If $\pi=\emptyset$, then we have nothing to prove. Therefore, we can
assume that $\pi$ is non-empty.  In a nontrivial $\pi$-group $G$, we pick
an element $a$ of prime order $p\in\pi$ and chose a prime $q\in\rho
\setminus\pi$ and $k\in\NN$ in such a way that $q^k>p$. The equation
$
(ax)^{q^k}=x^{q^k}
$
cannot have a solution in a $\pi$-group since
the map $g\mapsto g^{q^k}$ is bijective in every $\pi$-group.

It remains to show that the equation $(ax)^{q^k}=x^{q^k}$
is solvable in some $\rho$-group $\widetilde G\supset G$.
To this end, let $\widetilde G=G\wr\la \~x\ra_{q^k}$.
The group $G$ embeds in the base of this wreath product 
by the map
$$
g\mapsto (\underbrace{g,g,\dots,g}_{p\ elements},
\underbrace{1,1,\dots,1}_{q^k-p\ elements}).
$$
The element $\~x\in\widetilde G$ is a solution to the equation
$(ax)^{q^k}=x^{q^k}$ as both sides evaluate to $1$ at this element. This
completes the proof.

\medskip

\emph{Claim (18).}
It suffices to note that the group $\widetilde G$ from the proof of Claim
(16) 
is finite whenever $G$ is finite.

\paragraph{\bf Inseparability by a single equation}
in (1), (5), and (11) follows immediately from the Brodskii--Howie--Short
theorem (see above).

\paragraph{Separability by finite systems of equations}
of the subclass $\K$ in the class $\L$ in all the remaining cases is a
consequence of Theorem \ref{Thm:Simple}, the simplicity of the
corresponding class $\L$ (see Proposition \ref{Prop:Simple}), and the
following quasi-identities satisfied in $\K$ but not in $\L$.

\emph{Claim (1).}
The quasi-identity $x^2=1  \,\Rightarrow \,  x=1$ holds in all
torsion-free groups, but not in all groups.

\emph{Claim (2).}
By the Boone--Higman theorem, the free group of rank $2$ embeds into a
simple subgroup~$S$ of a finitely presented group $H$. Applying Lemma
\ref{Lem:qi} to $Q=H$ and any non-trivial $q\in S$ yields the required
quasi-identity.

\emph{Claim (4).} As shown in \cite{Mil81},
there exists a finitely presented group $M$ having no nontrivial
homomorphisms to a group with decidable word problem. Applying Lemma
\ref{Lem:qi} to $Q=M$ and any non-trivial element $q\in M$, we obtain the
required quasi-identity.

\emph{Claim (5).}
The group $\la x,y\mid y^{2x}=y^{-2},\;x^{2y}=x^{-2}\ra $ is torsion-free,
but is not left-orderable (it is not even a unique-product group), and its
quotient group by any nontrivial proper normal subgroup has torsion
\cite{Pr88}. Thus, the quasi-identity $(y^{2x}=y^{-2}\;\&\;x^{2y}=x^{-2})
\,\Rightarrow \, x=1$ is satisfied in all left-orderable groups, but not
in all torsion-free groups.

\emph{Claim (6).}
By \cite{LM}, there exists a finitely presented, non-amenable,
free-group-free group $Q$ (denoted by $G_0$ in \cite{LM}); furthermore, the derived subgroup $Q^{\prime}$ is know to be simple \cite{BLR}. Note that $Q^{\prime}$ must be non-amenable as otherwise $Q$ would be amenable. Therefore, $Q^{\prime}$ is contained in the kernel of every homomorphism from $Q$ to an amenable group. Thus, Lemma \ref{Lem:qi} can be applied to $Q$ and any non-trivial element $q\in Q^{\prime}$.

\emph{Claim (7).}
This is a special case of Claim (17), which is proved below.

\emph{Claim (8).}
Let $D$ be a finitely presented group with decidable word problem that
cannot be embedded into a group with decidable conjugacy problem; such a
group exists by \cite{Dar21}. By \cite[Theorem 4.11]{Th80}, $D$ embeds
into a simple subgroup of a finitely presented group
$T=\la x_1,\dots,x_n\mid w_1=1,\dots,\;w_m=1\ra $ with decidable word
problem.  Applying Lemma \ref{Lem:qi} to $Q=T$ and any non-trivial element
$q\in D$ yields the required quasi-identity.

\emph{Claim (9).}
There exists a non-residually finite, finitely presented solvable (hence
amenable) group $B$ \cite{Ba73}. Applying Lemma \ref{Lem:qi} to $Q=B$ and
any non-trivial element in the intersection of kernels of all
homomorphisms from $B$ to finite groups, we obtain the required
quasi-identity.

\emph{Claim (11).}
The group $\la x,y,z\mid x^2=y^3=x^7=xyz\ra $ (obviously) coincides with
its commutator subgroup, is left-orderable, and nontrivial ($x\ne1$)
\cite{Ber91}.
Thus, the quasi-identity $x^2=y^3=x^7=xyz\,\Rightarrow\, x=1$ holds in all
locally indicable groups but not in all left-orderable groups.

\emph{Claim (12).}
It suffices to apply Lemma \ref{Lem:qi} to the (amenable) alternating
group $A_5$.

\emph{Claim (14).}
The quasi-identity $x=[x,y]\,\Rightarrow\, x=1$ obviously holds in all
nilpotent groups, but does not hold in the solvable (even metabelian)
Baumslag--Solitar group
$$
BS(1,2)= \la x,y\mid x^y=x^2\ra = \la x,y\mid
x=[x,y]\ra.
$$

\emph{Claim (17).}
For $\rho \ne\{2\}$ the required quasi-identity exists by the following.

\begin{thm}[Ivanov \cite{I05}]
For any odd $n>2^{16}$ and any class $\K$ such that all finitely generated
subgroups of period $n$ of groups from $\K$ are finite, there exists an
infinite finitely generated group of period $n^3$ that does not belong to
the quasivariety generated by $\K$.
\end{thm}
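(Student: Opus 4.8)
The plan is to deduce the statement from a single hard construction in combinatorial group theory --- a ``Burnside-compatible'' finite-presentation encoding of an infinite periodic group --- after an easy model-theoretic reduction.

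\emph{Step 1: reduction to a finite-presentation problem.} Recall that the quasivariety generated by $\K$ is precisely the class of groups satisfying every quasi-identity that holds throughout $\K$ (Mal'cev; see \cite{Go99,Bu02}). Hence, to prove the theorem it suffices to produce an \emph{infinite} finitely generated group $W$ of exponent dividing $n^3$, with a finite generating tuple $\bar g=(g_1,\dots ,g_d)$, together with finitely many relators $u_1,\dots ,u_k$ of $W$ (words in $\bar g$) and a word $w=w(\bar g)$ that is nontrivial in $W$, such that every homomorphism $\gp{\bar g\mid u_1,\dots ,u_k}\to A$ with $A\in\K$ sends $w$ to $1$. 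Indeed, the quasi-identity $(u_1=1\,\&\,\cdots\,\&\,u_k=1)\Rightarrow w=1$ then holds in every group of $\K$ --- a violating tuple in some $A\in\K$ would give a homomorphism $\gp{\bar g\mid u_1,\dots ,u_k}\to A$ not killing $w$ --- while it fails in $W$ under $\bar x\mapsto\bar g$, so $W$ lies outside the quasivariety generated by $\K$.

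\emph{Step 2: assembling $W$.} Fix a $2$-generated infinite simple group $B$ of exponent $n$; such groups are available for odd $n$ throughout the range $n>2^{16}$ by the geometric machinery behind the Novikov--Adian theorem and Ol'shanskii's method (and a fortiori by the techniques underlying \cite{I05}), and $B$ may be taken recursively presented. The hypothesis on $\K$ enters exactly here: any homomorphism $B\to A$ with $A\in\K$ is trivial, since a nontrivial one would be injective and would embed the infinite finitely generated group $B$ of period $n$ into $A\in\K$, contradicting the assumption that such subgroups are finite; in particular it kills any fixed $w\in B\setminus\{1\}$. Now embed (a recursive presentation of) $B$ into a finitely presented group $P=\gp{\bar g\mid u_1,\dots ,u_k}$ by a Higman-type embedding that is moreover \emph{Burnside-compatible}, meaning that $B$ stays embedded in $P/P^{\,n^3}$, where $P^{\,n^3}$ denotes the normal subgroup generated by all $n^3$-th powers. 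Put $W:=P/P^{\,n^3}$. Then $W$ is finitely generated, of exponent dividing $n^3$, and infinite since it contains $B$; the $u_i$ hold in $W$, $w\neq1$ in $W$ (as $w\neq1$ in $B\hookrightarrow W$), and any homomorphism $P\to A$ with $A\in\K$ restricts on $B$ to a trivial homomorphism, hence kills $w$. This is exactly the configuration demanded in Step~1.

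\emph{Step 3: the main obstacle.} Everything is thereby reduced to the existence of the Burnside-compatible embedding $B\hookrightarrow P$, and this is the crux of \cite{I05}. The natural route is Ol'shanskii's graded van Kampen diagram technique (with the refinements developed for embedding periodic groups): one encodes the recursively enumerable relator set of $B$ by finitely many relators via a machine-to-group gadget, and must then verify that adjoining the Burnside relators $A^{n^3}$ over a suitable system of period words does \emph{not} collapse the distinguished copy of $B$. Concretely, this means showing that the combined graded presentation still satisfies the small-cancellation and contiguity conditions under which the Greendlinger-type lemma applies and van Kampen diagrams over it can be analysed --- in the spirit of small cancellation over free products as in \cite{LS80}, but in its graded, Burnside-adapted form. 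Controlling the interaction between the finite-presentation machinery and the Burnside relators --- so that neither destroys the other, and in particular no nontrivial element of $B$ becomes an $n^3$-th power --- is the hard part; it is also why exponent $n$ is replaced by exponent $n^3$, the extra torsion introduced by the simulating relations being controllable only up to order $n^3$.
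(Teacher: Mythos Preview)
The paper does not prove this theorem: it is quoted verbatim from Ivanov~\cite{I05} and used as a black box in the proof of Claim~(17). There is therefore no ``paper's own proof'' to compare your proposal against.

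As for your outline itself: Steps~1 and~2 are logically sound. The Mal'cev reduction is standard, and the argument that every homomorphism $P\to A$ with $A\in\K$ kills $w$ is correct (the image of $B$ is a finitely generated subgroup of period $n$, hence finite by hypothesis, hence trivial since $B$ is infinite simple). The shape of the construction --- an infinite simple periodic core wrapped in a finitely presented hull, then cut down by Burnside relators --- is a plausible reading of what lies behind Ivanov's result.

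That said, what you have written is an architectural sketch rather than a proof. The entire weight of the theorem rests on the ``Burnside-compatible Higman embedding'' of Step~3, and you do not prove it; you describe what would need to be checked and then appeal to \cite{I05} for the execution. Two points deserve flagging. First, the existence of a $2$-generated infinite \emph{simple} group of exponent $n$ for every odd $n>2^{16}$ is itself a nontrivial input; Ol'shanskii's original Tarski-monster bounds are much larger, and pushing this down to $2^{16}$ already requires Ivanov-level technology. Second, a Higman-type embedding that survives the passage to $P/P^{\,n^3}$ is far from automatic: standard Higman embeddings introduce free subgroups and are destroyed by Burnside quotients, so one must build the finite presentation and the graded Burnside relations simultaneously --- which is precisely the technical heart of \cite{I05}. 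Your Step~3 names this honestly, but a reader should understand that ``reduced to'' here means ``reduced to the actual content of Ivanov's paper.''
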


Indeed, it suffices to take $\K=\{\text{finite $\rho$-groups}\}$ and
$n=p^{16}$, where $p\ne 2$ is any prime from the set $\rho$; note that
$\rho$ is  nonempty, because it contains a proper subset by the assumption
of the theorem.

If $\rho=\{2\}$, we take a minimal finite-index subgroup $G$ in the free
Burnside $2$-generated group of period $2^k$, where $k\gg 1$ (such a
minimal subgroup exists by \cite{Z91}). The group $G$ is finitely
generated (because it is a finite-index subgroup of a finitely generated
group), nontrivial (because the free Burnside group is infinite for
sufficiently large $k$ \cite{I94, Ly96}), and coincides with its
commutator subgroup (by minimality). Let $G=\la x_1,\dots,x_n\ra$, where
$x_1\ne 1$, and let $x_i=w_i(x_1,\dots,x_n)$, where the words $w_i$ lie in
the commutator subgroup of the free group with basis
$\{ x_1, \ldots, x_n\}$. The quasi-identity
$$
\big(x_1=w_i(x_1,\dots,x_n)\;\&\dots\;\&\;x_n=w_i(x_1,\dots,x_n) \big) \,
\Rightarrow\, x_1=1
$$
does not hold in $G$, but holds in all finite $2$-groups because a finite
$p$-group coinciding with its commutator subgroup is trivial.

\paragraph{Separability by univariate systems.}
In \emph{Claim (13)}, the separating 
%AK
%equations provided above are 
equation provided above is
already
univariate. To deal with other cases, we introduce the following.

\begin{defn}
A class $\L$ of groups is called \emph{RH-closed} if every group
hyperbolic relative to a group from $\L$ belongs to $\L$ itself.
\end{defn}

\begin{lem}\label{Lem:red}
Suppose that $\K$ is an equationally separable subclass of an RH-closed
class of groups $\L$. Then
\begin{tikzcd}
\K \arrow[r, "ue" description, hook] & \L.
\end{tikzcd}
\end{lem}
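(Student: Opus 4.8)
The plan is to combine the equational separability of $\K$ in $\L$ with Proposition~\ref{Prop:reduction}, using the RH-closedness of $\L$ to stay inside $\L$ after the reduction. First I would start with a nontrivial group $G\in\K$ and invoke the hypothesis that $\K$ is equationally separable in $\L$: this gives a finite system $\mathcal A=\{w_i=1\mid i=1,\dots,m\}$ of equations with coefficients in $G$, in some unknowns $x_1,\dots,x_n$, that is solvable in some group $K\in\L$ containing $G$ but not solvable in any group from $\K$. Next, I would fix any $a\in G\setminus\{1\}$ and apply Proposition~\ref{Prop:reduction} to produce the univariate system $\mathcal B$ (with single unknown $t$) obtained from $\mathcal A$ by the substitutions $x_i=at^{20i+1}at^{20i+2}\cdots at^{20i+20}$. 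By that proposition, since $\mathcal A$ has a solution in $K\supseteq G$, the system $\mathcal B$ has a solution in some group $L_0$ hyperbolic relative to $K$; since $K\in\L$ and $\L$ is RH-closed, we get $L_0\in\L$. So $\mathcal B$ is a finite univariate system over $G$ that is solvable in $\L$.

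The remaining — and main — point is to check that $\mathcal B$ is \emph{not} solvable in $\K$. Here I would argue as follows: suppose $\mathcal B$ has a solution $t=\tilde t$ in some group $\tilde G\in\K$ containing $G$. Then setting $\tilde x_i:=a\tilde t^{\,20i+1}a\tilde t^{\,20i+2}\cdots a\tilde t^{\,20i+20}\in\tilde G$ for $i=1,\dots,n$ yields a solution of the original system $\mathcal A$ in $\tilde G\supseteq G$, because each equation $w_k=1$ of $\mathcal A$ becomes, under these substitutions, exactly the equation $u_k=1$ of $\mathcal B$, which $\tilde t$ satisfies. This contradicts the fact that $\mathcal A$ is not solvable in $\K$. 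Hence $\mathcal B$ is not solvable in $\K$, and $\mathcal B$ is a univariate separating system for $G$, establishing
\begin{tikzcd}
\K \arrow[r, "ue" description, hook] & \L.
\end{tikzcd}

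The step I expect to require the most care is the bookkeeping that the substituted words $w_k$ literally coincide with the words $u_k$ defining $\mathcal B$ — i.e., that passing a solution of $\mathcal B$ back through the substitution really recovers a genuine solution of $\mathcal A$ in $\tilde G$. This is essentially the content already used inside the proof of Proposition~\ref{Prop:reduction} (via the embedding $\iota\colon G\ast F_n\to G\ast\ZZ$ of Lemma~\ref{Lem:GFn}), so no new difficulty arises; one simply has to note that the construction of $\mathcal B$ is purely syntactic and does not depend on which overgroup of $G$ one works in. Everything else is a direct assembly of Proposition~\ref{Prop:reduction}, the definition of RH-closedness, and the definition of equational separability.
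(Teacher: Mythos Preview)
Your proposal is correct and follows essentially the same route as the paper's proof: start from a separating system $\mathcal A$ for $G$, apply Proposition~\ref{Prop:reduction} to obtain the univariate system $\mathcal B$, use RH-closedness of $\L$ to conclude that $\mathcal B$ is solvable in $\L$, and observe that any solution of $\mathcal B$ in an overgroup yields (by back-substitution) a solution of $\mathcal A$ there, so $\mathcal B$ is unsolvable in $\K$. The paper's argument is identical in substance, only more tersely phrased.
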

\begin{proof}
Let $G\in \K$ and let $\mathcal A$ be a separating system that has a
solution in some group $\widetilde G\in \L$ containing $G$ but is
unsolvable in $\K$. Let $\mathcal B$ be the system provided by Proposition
\ref{Prop:reduction}. Since $\mathcal B$ is obtained from $\mathcal A$ by
a substitution, every solution to $\mathcal B$ in a certain group yields a
solution to $\mathcal A$ in the same group. This implies that $\mathcal B$
is unsolvable in $\K$. On the other hand, Proposition~\ref{Prop:reduction}
guarantees that $\mathcal B$ is solvable in a group hyperbolic relative to
$\widetilde G$, which belongs to $\L$ by our assumption.
\end{proof}

We now return to the proof of separability by univariate systems in the
remaining claims of Theorem \ref{Thm:main}.

\emph{Claims (1)--(4)}.
The class $\L$ of all groups is obviously RH-closed and Lemma
\ref{Lem:red} applies.

\emph{Claim (8) and (10).}
The classes of groups with decidable WP and CP are RH-closed by \cite{F}
and \cite{Bu04} respectively, so we can apply Lemma \ref{Lem:red} again.

\emph{Claim (7).}
Let $G$ be a finite group and let $\mathcal A$ be a separating system that
has a solution in some periodic group $\widetilde G$ containing $G$ but is
unsolvable in the class of finite groups. Let $\mathcal B$ be the system
provided by Proposition \ref{Prop:reduction}. Arguing as in Lemma
\ref{Lem:red}, we obtain that $\mathcal B$ is unsolvable in the class of
finite groups. On the other hand, by Proposition \ref{Prop:reduction},
$\mathcal B$ is solvable in a group $R$ hyperbolic relative to
$\widetilde G$. By Corollary \ref{Cor:T}, there is a homomorphism
$\gamma\colon R\to T$, where $T$ is a periodic group, such that the
restriction of $\gamma $ to $\widetilde G$ is injective. Identifying
$\widetilde G$ with its image in $T$,  and mapping the solution to the
system $\mathcal B$ in $R$ to the group $T$ via $\gamma$, we obtain that
$\mathcal B$ has a solution in a torsion group containing $\widetilde G$.

\vspace{5mm}

\noindent \textbf{Alexander A. Buturlakin: }
Sobolev Institute of Mathematics,
Novosibirsk 630090, prospekt Akademika Koptyuga, 4.

\noindent E-mail: \emph{buturlakin@gmail.com}

\bigskip

\noindent \textbf{Anton A. Klyachko: }
Faculty of Mechanics and Mathematics of Moscow State University,
Moscow 119991, Leninskie gory, MSU.

\noindent
Moscow Center for Fundamental and Applied Mathematics.

\noindent E-mail: \emph{klyachko@mech.math.msu.su}

\bigskip

\noindent \textbf{Denis V. Osin: }
Department of Mathematics, Vanderbilt University, Nashville 37240, U.S.A.

\noindent E-mail: \emph{denis.v.osin@vanderbilt.edu}

\end{document}
